\newtheorem{theorem}{Theorem}[section]
\newtheorem{definition}{Definition} [section]
\newtheorem{assumption}{Assumption}[section]
\newtheorem{corollary}{Corollary}[theorem]
\newtheorem{lemma}[theorem]{Lemma}
\newtheorem{proposition}[theorem]{Proposition}
\newcounter{yuppo}
\newcommand{\Keler} {K\"{a}hler }
\newcommand{\keler} {K\"{a}hler }
\newcommand{\kelerian} {K\"{a}hlerian }
\newcommand{\cd}{\cdot}
\renewcommand{\setminus}{-}
\renewcommand{\phi}{\varphi}
\newcommand{\lra}{\longrightarrow}
\newcommand{\C}{\mathbb{C}}
\newcommand{\R}{\mathbb{R}}
\newcommand{\Q}{\mathbb{Q}}
\newcommand{\Gl}{\operatorname{Gl}}
\newcommand{\Ad}{\operatorname{Ad}}
\newcommand{\enf}{\emph}
\newcommand{\desudt}[1] []      {\dfrac {\mathrm {d} #1 }{\mathrm {dt}}}
\newcommand{\desudtzero}        {\desudt \bigg \vert _{t=0} }
\newcommand{\zero}{\bigg \vert _{t=0} }
\newcommand{\liu}{\mathfrak{u}}
\newcommand{\lia}{\mathfrak{a}}
\newcommand{\lieh}{\mathfrak{h}}
\newcommand{\liek}{\mathfrak{k}}
\newcommand{\lier}{\mathfrak{r}}
\newcommand{\liel}{\mathfrak{l}}
\newcommand{\lieg}{\mathfrak{g}}
\newcommand{\liep}{\mathfrak{p}}
\newcommand{\lieq}{\mathfrak{q}}
\newcommand{\liez}{\mathfrak{z}}
\newcommand{\liet}{\mathfrak{t}}
\newcommand{\la}{\lambda}
\newcommand{\sx}{\langle} 
\newcommand{\xs}{\rangle}
\newcommand{\noparty}[1]{}
\newcommand{\scalo}{\sx \, , \, \xs}
\newcommand{\mup}{\mu_\liep}
\title{A Hilbert-Mumford Criterion for polystability for actions of real reductive Lie groups}
\author{Leonardo Biliotti}
\address{Dipartimento di Scienze Matematiche, Fisiche e Informatiche \\
          Universit\`a di Parma (Italy)}
\email{leonardo.biliotti@unipr.it}
\author{Oluwagbenga Joshua Windare}
\address{Dipartimento di Scienze Matematiche, Fisiche e Informatiche \\
          Universit\`a di Parma (Italy)}
\email{oluwagbengajoshua.windare@unipr.it}
\keywords{Momentum map, Hilbert criterion, stability.}
\thanks{The first author was partially supported by PRIN  2017
   ``Real and Complex Manifolds: Topology, Geometry and holomorphic dynamics ''. Both authors were partially supported by ``National Group for Algebraic and Geometric Structures, and their Applications'' (GNSAGA - INDAM).}
\subjclass[2010]{53D20; 14L24.}
\begin{document}
\maketitle
\begin{abstract}
\noindent
We presented a Hilbert-Mumford criterion for polystablility associated with an action of a real reductive Lie group $G$ on a real submanifold $X$ of a \Keler manifold $Z$. Suppose the action of a compact Lie group with Lie algebra $\liu$ extends holomorphically to an action of the complexified group $U^\C$ and that the $U$-action on $Z$ is Hamiltonian. If  $G\subset U^\C$ is compatible, there is a corresponding gradient map $\mu_\mathfrak{p}: X\to \mathfrak{p}$, where $\lieg = \liek \oplus \liep$ is a Cartan decomposition of the Lie algebra of $G$. Under some mild restrictions on the $G$-action on $X,$ we characterize which $G$-orbits in $X$ intersect $\mu_\liep^{-1}(0)$ in terms of the maximal weight function, which we viewed as a collection of maps defined on the boundary at infinity ($\partial_\infty G/K$) of the symmetric space $G/K$.
\end{abstract}
\maketitle
\section{Introduction}
\pagenumbering{arabic}
The classical Hilbert-Mumford criterion in Geometric Invariant Theory (in projective algebraic geometry) is an explicit numerical criterion for finding the stability of a point in terms of an invariant known as maximal weight function \cite{MUMFORD}. This criterion has been extended to the non-algebraic \kelerian settings using the theory of \keler quotients and a version of maximal weight function \cite{MUNDETC, MUNDETT, Teleman, BT, KLM}. For this setting, a \Keler manifold $(Z,\omega)$ with a holomorphic action of a complex reductive Lie group $U^\C$, where $U^\C$ is the complexification of a compact  Lie group $U$ with Lie algebra $\mathfrak{u}$ is considered. Assume $\omega$ is $U$-invariant and that there is a $U$-equivariant momentum map $\mu : Z \to \mathfrak{u}^*.$ By definition, for any $\xi \in \mathfrak{u}$ and $z\in Z,$ $d\mu^\xi = i_{\xi_Z}\omega,$ where $\mu^\xi(z) := \mu(z)( \xi )$ and $\xi_Z$ denotes the fundamental vector field induced on $Z$ by the action of $U,$ i.e.,
$$\xi_Z(z) := \frac{d}{dt}\zero \exp(t\xi)z
$$
(see, for example, \cite{Kirwan} for more details on the momentum map).

Our aim is to investigate a class of actions of real reductive Lie groups on real submanifolds of $Z$ using gradient map techniques. This setting was recently introduced in \cite{PG,heinzner-schwarz-stoetzel,heinzner-schuetzdeller}. More precisely, a subgroup $G$ of $U^\C$ is compatible if $G$ is closed and the map $K\times \mathfrak{p} \to G,$ $(k,\beta) \mapsto k\exp(\beta)$
is a diffeomorpism where $K := G\cap U$ and $\mathfrak{p} := \mathfrak{g}\cap \textbf{i}\mathfrak{u};$ $\mathfrak{g}$ is the Lie algebra of $G$.
The Lie algebra $\mathfrak{u}^\C$ of $U^\C$ is the direct sum $\mathfrak{u}\oplus \textbf{i}\mathfrak{u}.$ It follows that $G$ is compatible with the
Cartan decomposition of $U^\C = U\exp(\textbf{i}\mathfrak{u})$, $K$ is a maximal compact subgroup of $G$ with Lie algebra $\mathfrak{k}$ and that
$\mathfrak{g} = \mathfrak{k}\oplus \mathfrak{p}$. 
The inclusion $\textbf{i}\mathfrak{p}\hookrightarrow \mathfrak{u}$ induces by restriction, a $K$-equivariant map $\mu_{\textbf{i}\mathfrak{p}} : Z \to (\textbf{i}\mathfrak{p})^*.$ One can choose and fix an $\mathrm{Ad}(U^\C)$-invariant inner product $B$ of Euclidian type on the Lie algebra $\liu^\C$, see \cite[Section 3.2]{BT}, \cite[Definition 3.2.4]{LT1} and also \cite[Section 2.1]{He} for the analog in the algebraic GIT. Such an inner product will automatically induce a well-defined inner product on any maximal compact subgroup $U'$ of $U^\C$.

Let $\scalo$ denote the real part $B$. Then $\scalo$ is positive definite on $\textbf{i}\liu$, negative definite on $\liu$, $\langle \liu, \textbf{i} \liu \rangle=0$ and finally the multiplication by $\textbf{i}$ satisfies $\langle \textbf i \cdot ,\textbf{i} \cdot \rangle=-\scalo$.  In order to simplify the notation we replace consideration of $\mu_{\text{i}\mathfrak{p}}$ by that of $\mup:Z \lra \liep$, where
$$\mu_\mathfrak{p}^\beta (x):=\langle \mup(x),\beta \rangle:=\langle \textbf {i}\mu(x),\beta\rangle =-\langle \mu(x),-\textbf{i}\beta\rangle=\mu^{-\textbf{i} \beta}(x).$$
The map $\mup:Z \lra \liep$ is $K$-equivariant and grad$\, \mu_\mathfrak{p}^\beta = \beta_Z$ for any $\beta \in \liep$. Here the grad is computed with respect to the Riemannian metric induced by the \Keler structure. $\mu_\mathfrak{p}$ is called the $G$-gradient map associated with $\mu$.
For a $G$-stable locally closed real submanifold $X$ of $Z,$ we consider $\mu_\mathfrak{p}$ as a map $\mu_\mathfrak{p}: X\to \mathfrak{p}$ such that $\mathrm{grad}\, \mup=\beta_X$, where the gradient is now computed with respect to the induced Riemannian metric on $X$.

Different notions of stability of points in $X$ can be identified by taking into account the position of their $G$-orbits with respect to
 $\mu_\liep^{-1}(0).$ A point $x\in X$ is polystable if it's $G$-orbit intersects the level set $\mu_\liep^{-1}(0)$
 ($G\cdot x \cap \mu_\liep^{-1}(0) \neq \emptyset$). As pointed out in the introduction in \cite{MUNDETT} (see also \cite{Stability}), a set of polystable points plays a critical role in the construction of a good quotient of $X$ by the action of $G.$ The aim of this article is to answer the first part of question 1.1 in \cite{MUNDETT} for actions of real Lie groups on real submanifolds of a K\"ahler manifold, generalizing \cite{MUNDETT}.
Following \cite{MUNDETT}, we require a mild technical restriction to be satisfied; namely, the fundamental vector field induced by the action grows at most linearly with respect to the distance function from a given base point. More precisely,

 \begin{assumption}\label{assumption1} $X$ is connected, and
there exists a point $x_0\in X$ and a constant $C > 0$ such that for any $x\in X$ and any $\beta \in \liep,$
\begin{equation}\label{assumption}
    \parallel\beta_X(x)\parallel \leq C\parallel\beta\parallel(1 + d_X(x_0, x)),
\end{equation}

where $d_X$ denotes the geodesic distance between points of $X.$

\end{assumption}

$G$-action on $X$ when $X$ is compact, and a closed and a compatible subgroup $G \subset SL(n, \R)$-action on $\R^n$ satisfy the assumption.

Under this assumption, we construct a collection of maps $$\lambda_x: \partial_\infty (G/K) \to \R \cup \{\infty \}$$ called the maximal weight function. We then prove that a point $x\in X$ is polystable if and only if $\lambda_x \geq 0$ and for any $p\in \partial_\infty (G/K)$ such that $\lambda_x(p) = 0$ there exists $p'\in \partial_\infty (G/K)$ such that $p$ and $p'$ are connected in the sense of Definition \ref{deff} (Theorem \ref{main result}.)

The authors in \cite{Stability} gave a Hilbert-Mumford criterion for semistability and polystability under a certain completeness condition. However, the result of this article on polystability is much stronger as we did not assume any completeness condition.
\section{Compatible subgroups and parabolic subgroups}
\label{comp-subgrous}
Let $H$ be a Lie group with Lie algebra $\mathfrak{h}$ and $E, F \subset \mathfrak{h}$. Then, we set
 \begin{gather*}
   E^F :=     \{\eta\in E: [\eta, \xi ] =0, \forall \xi \in F\} \\
   H^F = \{g\in H: \Ad g (\xi ) = \xi, \forall \xi \in F\}.
 \end{gather*}
 If $F=\{\beta\}$ we write simply $E^\beta$ and $H^\beta$.

 Let $U$ be a compact Lie group and let $U^\C$ be the corresponding complex linear algebraic group \cite{chevalley}. The group $U^\C$ is reductive and is the universal complexification of $U$ in the sense of \cite{ho}. On the Lie algebra level, we have the Cartan decomposition $\liu^\C=\liu\oplus \textbf{i} \liu$ with a corresponding Cartan involution $\theta:\liu^\C \lra \liu^\C$ given by $\xi+\textbf{i}\nu \mapsto \xi-\textbf{i}\nu$. We also denote by $\theta$ the corresponding involution on $U^\C$. The real analytic map $F:U\times \textbf{i}\liu \lra U^\C$, $(u,\xi) \mapsto u\exp(\xi)$ is a diffeomorphism. We refer to the composition $U^\C=U\exp(\textbf{i}\liu)$ as the Cartan decomposition of $U^\C$.

 Let $G\subset U^\C$ be a closed real
 subgroup of $U^\C$. We say
 that $G$ is \enf{compatible} with the Cartan decomposition of $U^\C$ if $F (K \times \liep) = G$ where $K:=G\cap U$ and $\liep:= \lieg \cap \textbf{i}\liu$. The
 restriction of $F$ to $K\times \liep$ is then a diffeomorphism onto
 $G$. It follows that $K$ is a maximal compact subgroup of $G$ and
 that $\lieg = \liek \oplus \liep$.  Since $K$ is a retraction of $G$, it follows that $G$ has only finitely many connected components and $G=KG^o$, where $G^o$ denotes the connected component of $G$ containing $e$.  Since $U$ can be embedded in $\Gl(N,\C)$ for
 some $N$, and any such embedding induces a closed embedding of
 $U^\C$, any compatible subgroup is a closed linear group. By \cite[Proposition 1.59 p.57]{knapp-beyond},
 $\lieg$ is a real reductive Lie algebra, and so $\lieg =
 \liez(\lieg)\oplus [\lieg, \lieg]$, where $\liez(\lieg)=\{v\in \lieg:\, [v,\lieg]=0\}$ is the Lie algebra of the center of $G$. Denote by $G_{ss}$ the analytic
 subgroup tangent to $[\lieg, \lieg]$. Then $G_{ss}$ is closed and
 $G^o=Z(G^o)^o\cd G_{ss}$ \cite[p. 442]{knapp-beyond}, where $Z(G^o)^o$ denotes the connected component of the center of $G^o$ containing $e$.
 \begin{lemma}[\protect{\cite[Lemma 7]{LA}}]\label{lemcomp}$\ $ \begin{enumerate}
   \item \label {lemcomp1} If $G\subset U^\C$ is a compatible
     subgroup, and $H\subset G$ is closed and $\theta$-invariant,
     then $H$ is compatible if and only if $H$ has only finitely many connected components.
   \item \label {lemcomp2} If $G\subset U^\C$ is a connected
     compatible subgroup, then $G_{ss}$ is compatible.
     \item \label{lemcomp3} If $G\subset U^\C$ is a compatible
       subgroup and $E\subset \liep$ is any subset, then $G^E$ is
       compatible. Indeed, $G^E=K^E\exp(\liep^E)$, where $K^E=K\cap G^E$ and $\liep^E=\{x\in \liep:\, [x,E]=0\}$.
   \end{enumerate}
 \end{lemma}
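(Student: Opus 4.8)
The three parts are linked: (a) is the substantive statement, and (b), (c) will follow from it or from the same mechanism. Throughout I write $\theta$ for the Cartan involution, so that $\theta|_U=\mathrm{id}$, $\theta|_{\textbf{i}\liu}=-\mathrm{id}$, and I recall that compatibility of $G$ means $G=K\exp(\liep)$ with $F\colon K\times\liep\to G$ a diffeomorphism. Since $H$ is $\theta$-invariant, its Lie algebra $\lieh$ is stable under the $\pm1$-eigenspace decomposition $\lieg=\liek\oplus\liep$, so $\lieh=\liek_H\oplus\liep_H$ with $\liek_H:=\lieh\cap\liek$ and $\liep_H:=\lieh\cap\liep=\lieh\cap\textbf{i}\liu$; this is the candidate Cartan decomposition. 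The forward implication in (a) is immediate: if $H$ is compatible then $H=K_H\exp(\liep_H)$ with $K_H=H\cap U$ compact and $\liep_H$ a vector space, so $H$ is diffeomorphic to $K_H\times\liep_H$ and has exactly as many components as the compact group $K_H$, i.e. finitely many.

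For the converse I would argue as follows. Let $h\in H$ and use the ambient Cartan decomposition of $G$ to write $h=k\exp(\xi)$ uniquely with $k\in K$, $\xi\in\liep$. Because $\theta|_K=\mathrm{id}$ and $\theta(\exp\xi)=\exp(-\xi)$, we get $\theta(h)=k\exp(-\xi)$, and $\theta$-invariance of $H$ gives $\theta(h)^{-1}h=\exp(2\xi)\in H$. Everything then reduces to the single claim: if $\xi\in\liep$ and $\exp(2\xi)\in H$, then $\xi\in\liep_H$ (equivalently $\exp(t\xi)\in H$ for all $t\in\R$). Granting this, $\exp(\xi)\in H$, hence $k=h\exp(-\xi)\in H\cap U=K_H$, so $h\in K_H\exp(\liep_H)$, and the restriction of $F$ exhibits the required diffeomorphism.

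I expect this last claim — that a real power $\exp(t\xi)$ of a positive element already present in $H$ cannot escape $H$ — to be the main obstacle, and it is exactly where finiteness of the number of components is indispensable. The discrete powers $\exp(2n\xi)$ lie in $H$, but their Euclidean closure is again discrete, so one cannot simply close up; the example $H=\exp(\Zeta)\cdot U(1)\subset\C^\ast$ (closed, $\theta$-invariant, infinitely many components, not compatible) shows the claim genuinely fails without the hypothesis. The plan is: by finiteness of components some power $\exp(2m\xi)$, $m\geq1$, lies in the identity component $H^o$; then, viewing $\exp(\textbf{i}\liu)$ as the space of positive definite elements and $t\mapsto\exp(2mt\,\xi)$ as the geodesic through $e$ and $\exp(2m\xi)$ in the associated symmetric space, the self-adjointness forced by $\theta$-invariance makes $H^o\cap\exp(\textbf{i}\liu)$ totally geodesic, so the entire geodesic stays in $H$. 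This is the Mostow-type polar-decomposition argument for self-adjoint groups, and it yields $2m\xi\in\lieh$, whence $\xi\in\lieh\cap\liep=\liep_H$.

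Part (b) follows from (a): $\theta$ is an automorphism of $\lieg$, so $[\lieg,\lieg]$ is $\theta$-stable and therefore $G_{ss}$, the analytic subgroup it generates, is a $\theta$-invariant subgroup; it is closed by the structure statement already recorded, and connected, hence has finitely many components, so (a) applies. For part (c) I would run the polar argument directly and avoid Mostow, because membership in $G^E$ is detected infinitesimally. First, $G^E$ is closed as a centralizer and $\theta$-invariant: for $g\in G^E$ and $\eta\in E\subset\liep$ one computes, using $\theta\eta=-\eta$ and $\Ad(g)\eta=\eta$, that $\Ad(\theta g)\eta=\theta\,\Ad(g)(-\eta)=-\theta\eta=\eta$, so $\theta g\in G^E$; correspondingly $\lieg^E$ is $\theta$-stable and splits as $\liek^E\oplus\liep^E$. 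Now take $g\in G^E$, write $g=k\exp(\xi)$ as before, and note $\exp(2\xi)=\theta(g)^{-1}g\in G^E$, i.e. $\exp(2\,\ad\xi)$ fixes $E$ pointwise. Since $\xi\in\textbf{i}\liu$ the operator $\ad\xi$ is self-adjoint for the chosen inner product, hence diagonalizable with real eigenvalues, so $\exp(2\,\ad\xi)\eta=\eta$ forces $\ad\xi\,\eta=0$ for every $\eta\in E$; thus $\xi\in\liep^E$, and then $k=g\exp(-\xi)\in G^E\cap U=K^E$. This proves $G^E=K^E\exp(\liep^E)$, which is precisely the compatibility of $G^E$, with $K^E$ compact so that finiteness of components is automatic.
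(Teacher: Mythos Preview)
The paper does not prove this lemma: it is stated with a citation to \cite[Lemma~7]{LA} and no argument is supplied, so there is nothing in the paper to compare your attempt against. Your proposal is a correct self-contained proof of the cited result. In (a) you correctly isolate the crux --- that $\exp(2\xi)\in H$ with $\xi\in\liep$ forces $\xi\in\liep_H$ --- and you use the finite-components hypothesis exactly where it is needed, to place a power $\exp(2m\xi)$ in $H^o$; invoking the Mostow polar decomposition for $\theta$-stable subgroups is the standard way to conclude, and your counterexample $H=\exp(\Zeta)\cdot U(1)\subset\C^\ast$ shows you understand why the hypothesis cannot be dropped. The only sentence that is a bit compressed is ``$H^o\cap\exp(\textbf{i}\liu)$ is totally geodesic'': establishing this is essentially equivalent to proving $H^o\cap\exp(\textbf{i}\liu)=\exp(\liep_H)$, which is the Mostow conclusion itself, so a reader might want the extra line that $H^o\cdot o$ and $\exp(\liep_H)\cdot o$ are both the complete totally geodesic submanifold of $G/K$ through $o$ with tangent space $\liep_H$, after which injectivity of $\exp|_{\liep}$ gives $2m\xi\in\liep_H$. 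Part (b) is a correct application of (a). Your direct argument for (c) --- $\exp(2\,\ad\xi)$ fixes $E$ pointwise, $\ad\xi$ is diagonalizable with real eigenvalues, hence $[\xi,E]=0$ --- is the clean way to handle centralizers and neatly sidesteps the full Mostow machinery.
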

If $\beta \in \liep$ the endomorphism $\mathrm{ad}(\beta) \in\mathrm{End}(\lieg)$ is diagonalizable over $\R$. Denote by $V_\lambda (\mathrm{ad}(\beta))$ the eigenspace of $\mathrm{ad}(\beta)$ corresponding to the eigenvalue $\lambda$. We define,
\begin{align*}
  G^{\beta+} &:=\{g \in G : \lim_{t\to - \infty} \exp (t\beta)\, g
  \exp (-t\beta) \text { exists} \}\\
  R^{\beta+} &:=\{g \in G : \lim_{t\to - \infty} \exp (t\beta)\,
  g \exp (-t\beta) =e \}\\
    \lier^{\beta+}: = \bigoplus_{\la > 0} V_\la (\mathrm{ad}( \beta)) \\
  G^{\beta-} &:\{g\in G:\, \lim_{t\to + \infty} \exp (t\beta)\, g \exp (-t\beta) \text { exists}\}\\
   R^{\beta-} &:=\{g \in G : \lim_{t\to + \infty} \exp (t\beta)\, g \exp(-t\beta)=e \}\\
  \lier^{\beta-}: = \bigoplus_{\la < 0} V_\la (\mathrm{ad}(\beta)).
\end{align*}
Note that $G^{\beta-}=G^{(-\beta)+}=\theta(G^{\beta+})$, $\theta(R^{\beta+})=R^{\beta-}$ and $G^{\beta}=G^{\beta+}\cap G^{\beta-}$.
The group $G^{\beta+}$, respectively $G^{\beta-}$, is a parabolic subgroup of $G$ with unipotent radical $R^{\beta+}$, respectively $R^{\beta-}$ and Levi factor $G^{\beta}$.
$R^{\beta+}$ is connected with Lie algebra $\lier^{\beta+}$. Hence $R^{\beta-}$ is connected with Lie algebra $\lier^{\beta-}$.
 The parabolic subgroup $G^{\beta+}$ is the semidirect product of $G^\beta$ with $R^{\beta+}$ and we have the projection $\pi^{\beta+} : G^{\beta+} \lra G^\beta$,
 $
 \pi^{\beta+}(g)=\lim_{t\to -\infty} exp(t\beta)g\exp(-t\beta).
 $
Analogously, $G^{\beta-}$ is the semidirect product of $G^{\beta}$ with $R^{\beta-}$ and we have the projection $\pi^{\beta-}:G^{\beta-} \lra G^\beta$,  $\pi^{\beta-}(g)=\lim_{t\to +\infty} exp(t\beta)g\exp(-t\beta)$. In particular, $ \lieg^{\beta+}= \lieg^\beta \oplus \lier^{\beta+}$, respectively  $ \lieg^{\beta-}= \lieg^\beta \oplus \lier^{\beta-}$.
\begin{proposition}\label{parabolic-decomposition}
For any $\beta \in \liep$, we have $G=KG^{\beta+}$.
\end{proposition}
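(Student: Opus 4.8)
The plan is to deduce this from the Iwasawa decomposition of the real reductive group $G$. First I would enlarge $\beta$ to a maximal abelian subalgebra $\lia \subset \liep$ containing it, which is possible because $\R\beta$ is an abelian subalgebra of $\liep$ and any such may be extended to a maximal one. Since $\beta\in\liep\subset \textbf{i}\liu$, the form $\scalo$ is positive definite on $\liep$, so the elements of $\ad(\lia)$ form a commuting family of $\R$-diagonalizable endomorphisms of $\lieg$; this yields the restricted root space decomposition $\lieg = \lieg^{\lia}\oplus\bigoplus_{\lambda\in\Sigma}\lieg_\lambda$, where $\lieg^{\lia}$ is the centralizer of $\lia$ and $\Sigma\subset\lia^*$ is the set of restricted roots.

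Next I would fix a positive system $\Sigma^+\subset\Sigma$ for which $\beta$ is dominant, that is $\lambda(\beta)\geq 0$ for all $\lambda\in\Sigma^+$, and set $\lien:=\bigoplus_{\lambda\in\Sigma^+}\lieg_\lambda$, $N:=\exp(\lien)$, $A:=\exp(\lia)$. With this choice the eigenspace decomposition of $\ad(\beta)$ refines the restricted root decomposition: each $\lieg_\lambda$ with $\lambda(\beta)>0$ lies in $\lier^{\beta+}$, while each $\lieg_\lambda$ with $\lambda(\beta)=0$ lies in $\lieg^\beta$. Since $\beta$ is dominant these are the only possibilities for $\lambda\in\Sigma^+$, so $\lien\subseteq\lieg^\beta\oplus\lier^{\beta+}=\lieg^{\beta+}$. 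Moreover $\lia\subseteq\lieg^{\lia}\subseteq\lieg^\beta\subseteq\lieg^{\beta+}$. As $G^{\beta+}$ is a closed subgroup with Lie algebra $\lieg^{\beta+}$, exponentiating gives $A\subseteq G^{\beta+}$ and $N\subseteq G^{\beta+}$, hence $AN\subseteq G^{\beta+}$.

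Finally I would invoke the Iwasawa decomposition $G=KAN$. Compatible subgroups are real reductive with finitely many connected components, so $G^o=K^oAN$ holds by the usual Iwasawa decomposition (the vector part of the center lies in $\lia$ since $\lia$ is maximal abelian in $\liep$); combining this with $G=KG^o$ and $KK^o=K$ yields $G=KAN$ for the possibly disconnected group $G$. Therefore $G=KAN\subseteq KG^{\beta+}\subseteq G$, forcing $G=KG^{\beta+}$.

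The step requiring the most care is the alignment of the chosen positive system with $\beta$ — namely that $\beta$ may be taken dominant so that $\lier^{\beta+}$ is contained in $\lien$ rather than merely comparable to it — together with the bookkeeping that promotes the connected Iwasawa decomposition to one for $G$ by absorbing the finitely many components into $K$ via $G=KG^o$. The remaining ingredients are the standard structure theory of restricted roots, which applies because $\lieg$ is real reductive and $\scalo$ is positive definite on $\liep$, and the fact, already recorded in the excerpt, that $\lieg^{\beta+}=\lieg^\beta\oplus\lier^{\beta+}$.
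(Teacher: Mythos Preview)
Your proof is correct. The paper's own proof is much shorter: it simply cites the connected case $G^{o}=K^{o}(G^{o})^{\beta+}$ as well known (referring to \cite[Lemma 9]{LA} and \cite[Lemma 4.1]{heinzner-schwarz-stoetzel}) and then passes to the general case via $G=KG^{o}$, exactly as you do in your last step. What you have supplied is, in effect, a self-contained proof of the cited connected result: the Iwasawa decomposition $G^{o}=K^{o}AN$ together with the observation that, once the positive system is chosen so that $\beta$ lies in the closed Weyl chamber, one has $AN\subseteq G^{\beta+}$. This is precisely the standard argument behind the references the paper invokes, so the two proofs are not really different in spirit; yours is just unpacked. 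The only cosmetic point is that in your final paragraph you speak of ``$\lier^{\beta+}$ contained in $\lien$'', whereas the inclusion actually used in your argument is the reverse one $\lien\subseteq\lieg^{\beta+}$; both hold when $\beta$ is dominant, but it is the latter that drives the proof.
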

\begin{proof}
If $G$ is connected, the result is well-known, see for instance \cite[Lemma 9]{LA} and \cite[Lemma 4.1]{heinzner-schwarz-stoetzel}. Since $G=KG^o$, it follows that
$
G=KG^o=K(G^{o})^{\beta+}=KG^{\beta+},
$
concluding the proof.
\end{proof}
\section{Symmetric space}
Let $U^\C$ be a complex reductive Lie group and let $G\subset U^\C$ be a closed compatible subgroup. Then $G = K\exp(\mathfrak{p}),$ where $K := G\cap U$ is a maximal compact subgroup of $G$ and $\mathfrak{p} := \mathfrak{g}\cap \text{i}\mathfrak{u};$ $\mathfrak{g}$ is the Lie algebra of $G.$ Let $M = G/K$ and let $\sx , \xs$ be the real part of an $\Ad(U^\C)$-invariant inner product $B$ of Euclidean type on the Lie algebra $\liu^\C.$  Equip $G$ with the unique left-invariant Riemannian metric which agrees with the scalar product $\sx \xi_1 + \beta_1, \xi_2 + \beta_2\xs:= -\sx\xi_1, \xi_2\xs + \sx\beta_1, \beta_2\xs$ where $\xi_1,\xi_2\in \liek$ and $\beta_1,\beta_2\in \liep$. We recall that $\langle \cdot,\cdot \rangle$ is negatively defined on $\liek$. This metric is $\mathrm{Ad}(K)$-invariant, and so, it induces a $G$-invariant Riemannian metric of nonpositive curvature on $M.$ $M$ is a symmetric space of non-compact type \cite{borel-ji-libro}. Let $\pi: G \rightarrow M$ be the projection onto the right cosets of $G$. $G$ acts isometrically on $M$ from left by
$$L_g :  M \to M; \quad L_g(hK) := ghK, \quad g,h \in G.$$ A geodesic $\gamma$ in $M$ is given by $\gamma = g\exp(t\beta)K,$ where  $g\in G$ and $\beta \in \liep.$ For $\beta \in \liep$, we set $\gamma^\beta(t) = \exp(t\beta)K$ and $o := K \in M.$

Since $M$ is a Hadamard manifold there is a natural notion of a boundary at infinity which can be described using geodesics. We refer the reader to \cite{borel-ji-libro} for more details. Two unit speed geodesics $\gamma, \gamma': \R \to M$ are equivalent, denoted by $\gamma \sim \gamma'$, if $\sup_{t>0}d(\gamma(t), \gamma'(t)) < +\infty.$

\begin{definition}
The \textit{Tits boundary} of $M$ denoted by $\partial_\infty M$ is the set of equivalence classes of unit speed geodesics in $M.$
\end{definition}

The map that sends $\beta \in \liep$ to the tangent vector $\dot{\gamma}^\beta(0)$ produces an isomorphism $\liep \cong T_oM.$ Since any geodesic ray in $M$ is equivalent to a unique ray starting from $o,$ the map
\begin{align*}
e: &S(\liep) \to \partial_\infty M;\\
&e(\beta) := [\gamma^\beta]
\end{align*}
where $S(\liep) := \{\beta \in \liep : |\beta| = 1\}$ is the unit sphere in $\liep,$ is a bijection. The \textit{sphere topology} is the topology on $\partial_\infty M$ such that $e$ is a homomorphism. Since $G$ acts by isometries on $M,$ then for every unit speed geodesic $\gamma,$ $g\gamma$ is also a unit speed geodesic for any $g\in G.$ Moreover, if $\gamma \sim \gamma'$ then $g\gamma \sim g\gamma'.$ There is a $G$-action on $\partial_\infty M$ given by:
$$g\cdot [\gamma] = [g\cdot \gamma]$$ and this action also induces a $G$-action on $S(\liep)$ given by $$g\cdot \beta := e^{-1}(g\cdot e(\beta)) = e^{-1}[g\cdot \gamma^\beta].$$ We point out that the $K$-action to $\partial_\infty M$ induces the adjoint action of $K$ on $S(\liep).$

Let $H$ be a compatible subgroup of $G,$ i.e $H := L \exp(\lieq),$ where $L := H \cap K$ and $\lieq = \lieh \cap \liep$, where $\lieh$ is the Lie algebra of $H.$ It follows that $H$ is a real reductive subgroup of $G.$ The Cartan involution of $G$ induces a Cartan involution of $H,$ $L$ is a maximal compact subgroup of $H,$ and $\lieh = \liel \oplus \lieh.$ There are totally geodesic inclusions $M' := H/L \hookrightarrow M$ and $\partial M' \hookrightarrow \partial M.$

\subsection{The Kempf-Ness Function}\label{Kempf-Ness Function}
Given $G$ a real reductive group which acts smoothly on $Z;$ $G = K\text{exp}(\mathfrak{p}),$ where $K$ is a maximal compact subgroup of $G.$ Let $X$ be a $G$-invariant locally closed submanifold of $Z.$ As Mundet pointed out in \cite{MUNDET}, there exists a function $\Phi : X \times G \rightarrow \mathbb{R},$ such that
$$
\langle \mu_\mathfrak{p}(x), \xi\rangle = \desudtzero \Phi (x, \exp(t\xi)), \qquad \xi \in \mathfrak{p},
$$ and satisfying the following conditions:

\begin{enumerate}
    \item For any $x\in X,$ the function $\Phi (x, .)$ is smooth on $G.$
    \item The function $\Phi(x, .)$ is left-invariant with respect to $K,$ i.e., $\Phi(x, kg) = \Phi(x, g).$
    \item For any $x\in X,$ $v\in \mathfrak{p}$ and $t\in \mathbb{R};$

    $$\frac{d^2}{dt^2}\Phi (x, \exp(tv)) \geq 0.$$
    Moreover: $$\frac{d^2}{dt^2}\Phi (x, \exp(tv)) = 0$$ if and only if $\exp(\mathbb{R}v)\subset G_x.$

    \item For any $x\in X,$ and any $g, h \in G;$
    $$\Phi(x, hg) = \Phi(x, g) + \Phi(gx, h).$$ This equation is called the cocycle condition. Finally, using the cocycle condition, we have
\[
\desudt \Phi(x,\exp(t\beta))=\langle \mup(\exp(t\xi)x),\beta\rangle.
\]
\end{enumerate}
The function $\Phi: X \times G \rightarrow \mathbb{R}$ is called the Kempf-Ness function for $(X, G, K)$. It is just the restriction of the classical Kempf-Ness function $Z\times U^\C \lra \R$ considered in \cite{MUNDET, Teleman} to $X\times G$  \cite{LM}. Moreover, if $H\subset G$ is compatile and $Y\subset X$ is a $H$-stable submanifold of $X$, then the restriction $\Phi_{\vert{Y\times H}}$ is the Kempf-Ness function of the $H$-gradient map on $Y$.

Let $x\in X$. By property $(b),$ i.e. $\Phi(x, kg) = \Phi(x, g)$, the function $\Phi_x : G \to \mathbb{R}$ given by $\Phi_x(g) := \Phi(x, g^{-1})$  descends to a function on $M$ which we denote by the same symbol. That is
$$\Phi_x : M \lra \mathbb{R}; \qquad \Phi_x (gK):=\Phi(x,g^{-1}).$$

The cocycle condition $(d)$ can be rewritten as
\begin{equation}\label{cocycle}
    \Phi_x(ghK) = \Phi_{g^{-1}x}(hK) + \Phi_x(gK),
\end{equation} and it is equivalent to $L_g^{*}\Phi_x = \Phi_{g^{-1}x} + \Phi_{g^{-1}x}(gK),$ where $L_g$ denotes the action of $G$ on $X$ given above.

Note that
$$-(d\Phi_x)_o(\dot{\gamma}^\beta(0)) = \desudtzero \Phi_x(\exp(-t\beta)K) = \desudtzero \Phi(x, \exp(t\beta)) = \langle \mu_\mathfrak{p}(x), \beta\rangle.$$

\begin{lemma}\label{ConvexKempfness}
Let $x\in X$ and let $\Phi_x : M \to \mathbb{R}.$ Suppose $\gamma(t) = g\exp(t\beta)K$ for $\beta \in \liep$ is a geodesic in $M,$ then
$\Phi_x \circ \gamma$ is  convex and so, $$\lim_{t\to \infty}\frac{d}{dt}(\Phi_x\circ \gamma) =  \lim_{t\to \infty}\frac{\Phi_x\circ \gamma}{t}$$.
\end{lemma}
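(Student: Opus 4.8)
The plan is to prove two things: first, that $\Phi_x \circ \gamma$ is convex along any geodesic, and second, that convexity together with the linear growth from Assumption~\ref{assumption1} yields the stated equality of limits. For the convexity, recall that by the cocycle condition \eqref{cocycle} we may reduce to the case $g = e$, since $\Phi_x(g\exp(t\beta)K) = \Phi_{g^{-1}x}(\exp(t\beta)K) + \Phi_x(gK)$, and the additive constant $\Phi_x(gK)$ does not affect convexity in $t$. Thus it suffices to examine $t \mapsto \Phi_{g^{-1}x}(\exp(t\beta)K) = \Phi(g^{-1}x, \exp(-t\beta))$. By property (c) of the Kempf-Ness function, $\frac{d^2}{dt^2}\Phi(y, \exp(tv)) \geq 0$ for every $y \in X$ and $v \in \liep$; applying this with $y = g^{-1}x$ and $v = -\beta$ shows $t \mapsto \Phi_x\circ\gamma(t)$ has nonnegative second derivative, hence is convex.

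Once convexity is established, the derivative $\frac{d}{dt}(\Phi_x\circ\gamma)$ is nondecreasing in $t$, so $\lim_{t\to\infty}\frac{d}{dt}(\Phi_x\circ\gamma)$ exists in $\R \cup \{+\infty\}$. The identity $\lim_{t\to\infty}\frac{d}{dt}(\Phi_x\circ\gamma) = \lim_{t\to\infty}\frac{\Phi_x\circ\gamma(t)}{t}$ is the standard fact that for a convex function $f$ on $[0,\infty)$, the asymptotic slope equals the limit of the difference quotient $f(t)/t$. I would prove this by writing $f(t) = f(0) + \int_0^t f'(s)\,ds$, dividing by $t$, and invoking the monotonicity of $f'$: since $f'$ is nondecreasing, its Cesàro average $\frac{1}{t}\int_0^t f'(s)\,ds$ converges to the same limit as $f'(t)$ itself (both equal to $\sup_{s} f'(s)$ when that is finite, and diverge to $+\infty$ together otherwise). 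The term $f(0)/t \to 0$, giving the claimed equality.

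The main subtlety is ensuring that the limits are genuinely well-defined rather than oscillating, and this is exactly where Assumption~\ref{assumption1} enters. A priori a convex function could have $f(t)/t \to +\infty$, and we want to rule out any pathology where the derivative fails to have a limit. Convexity alone guarantees $f'$ is monotone, hence the limit of $f'(t)$ exists in $\R\cup\{+\infty\}$ with no oscillation, so the equality holds unconditionally for convex functions. The linear growth estimate \eqref{assumption} is what guarantees that the derivative $\frac{d}{dt}(\Phi_x\circ\gamma)(t) = \langle \mup(\exp(t\beta)g^{-1}x), \beta\rangle$ (using the final formula in property (d)) grows at most linearly, controlling the behavior needed so that the maximal weight function $\lambda_x$ defined later as this asymptotic slope takes finite values on the relevant part of $\partial_\infty(G/K)$.

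I expect the convexity step itself to be routine given property (c), so the only real content is the clean reduction via the cocycle condition and the careful statement of the convex-analysis lemma on asymptotic slopes; the genuine difficulty, deferred to later in the paper, will be extracting from this limit a well-behaved function on the boundary at infinity.
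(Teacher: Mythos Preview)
Your proof is correct and follows essentially the same approach as the paper: establish convexity of $\Phi_x\circ\gamma$ (the paper simply cites \cite[Lemma~2.19]{bgs}, while you give a clean self-contained argument via the cocycle reduction and property~(c)), then deduce the equality of limits from the standard convex-analysis fact that the asymptotic slope of a convex function equals the limit of its derivative. Your third paragraph's invocation of Assumption~\ref{assumption1} is unnecessary here---as you yourself observe, convexity alone makes $f'$ monotone and hence both limits exist in $\R\cup\{+\infty\}$ and agree---so that discussion can be dropped; the assumption enters only later, in proving that $\lambda_x$ is well defined on $\partial_\infty M$.
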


\begin{proof}
That $\Phi_x$ is a convex function on $M$ follows from \cite[Lemma 2.19]{bgs}. Let $f(t) = (\Phi_x\circ \gamma)(t).$ Since $f$ is convex,
$$\frac{f(s)}{s}\leq f'(s) \leq \frac{f(t) - f(s)}{t-s} \quad 0 < s < t.$$ Furthermore, the two quantities are increasing in $s,$ while the third in $t.$ Hence,
$$\lim_{s\to \infty}\frac{f(s)}{s}\leq \lim_{s \to \infty} f'(s) \leq \lim_{t\to \infty} \frac{f(t) - f(s)}{t-s}.$$ Since the last limit is the same with the first limit, we have $\lim_{t \to \infty} f'(t) = \lim_{t\to \infty}\frac{f(t)}{t}$ and the result follows.
\end{proof}

\section{Stability and Maximal Weight Function}\label{analytic}
Let $(Z,\omega)$ be a \Keler manifold and $U^\C$ acts holomorphically on $Z$ with a momentum map $\mu : Z \to \mathfrak{u}.$ Let $G\subset U^\C$ be a closed compatible subgroup. $G = K\exp(\mathfrak{p}),$ where $K := G\cap U$ is a maximal compact subgroup of $G$ and $\mathfrak{p} := \mathfrak{g}\cap \text{i}\mathfrak{u};$ $\mathfrak{g}$ is the Lie algebra of $G.$

Suppose $X\subset Z$ is a $G$-stable locally closed connected real submanifold of $Z$ with the gradient map $\mu_\mathfrak{p} : X\to \mathfrak{p}.$ Let $d_X$ denote the geodesic distance between points of $X.$ We recall that by $G_x$ and $K_x$, we denote the stabilizer subgroup of $x\in X$ with respect to the $G$-action and the $K$-action respectively and by $\lieg_x$ and $\liek_x$ their respective Lie algebras.
\begin{definition}
Let $x\in X.$ Then:
\begin{enumerate}
\item $x$ is stable if $G\cdot x \cap \mu_\liep^{-1}(0) \neq \emptyset$ and $\lieg_x$ is conjugate to a Lie subalgebra of $\liek.$
\item $x$ is polystable if $G\cdot x \cap \mu_\liep^{-1}(0) \neq \emptyset.$
\item $x$ is semistable if $\overline{G\cdot x} \cap \mu_\liep^{-1}(0) \neq \emptyset.$
\end{enumerate}
\end{definition}
We denote by $X^s_{\mup}$, $X^{ss}_{\mup}$, $X^{ps}_{\mup}$ the set of stable, respectively semistable, polystable, points.
It follows directly from the definitions above that the conditions are $G$-invariant in the sense that if a point satisfies one of the conditions, then every point in its orbit satisfies the same condition, and for stability, recall that $\lieg_{gx} = \text{Ad}(g)(\lieg_x).$

The following well-known result establishes a relation between the Kempf-Ness function and the polystability condition. A proof is given in \cite{Stability}.
\begin{proposition}\label{polystable-I}
Let $x\in X$ and let $g\in G$. The following conditions are equivalent:
\begin{enumerate}
\item $\mup(gx)=0$.
\item $g$ is a critical point of $\Phi(x,\cdot)$.
\item $g^{-1}K$ is a critical point of $\Phi_x$.
\end{enumerate}
\end{proposition}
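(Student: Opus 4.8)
The plan is to prove the equivalences by exploiting two structural features of the Kempf--Ness function: its left $K$-invariance, which makes $\Phi(x,\cdot)$ descend to the function $\Phi_x$ on $M$, and the cocycle condition \eqref{cocycle}, which lets one transport the differential of $\Phi_x$ to the base point $o$. I would first dispose of the equivalence (b) $\Leftrightarrow$ (c), which is essentially a change of variables, and then establish (a) $\Leftrightarrow$ (c) by combining the cocycle with the displayed base-point identity $-(d\Phi_x)_o(\dot{\gamma}^\beta(0)) = \langle \mup(x),\beta\rangle$.

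For (b) $\Leftrightarrow$ (c): by definition $\Phi(x,g) = \Phi_x(g^{-1}K) = (\Phi_x \circ \iota)(g)$, where $\iota : G \to M$, $\iota(g) = g^{-1}K$, is the composition of inversion with the projection $\pi$ and is therefore a surjective submersion. The chain rule gives $(d\Phi(x,\cdot))_g = (d\Phi_x)_{g^{-1}K} \circ (d\iota)_g$, and since $(d\iota)_g$ is surjective onto $T_{g^{-1}K}M$, the left-hand side vanishes if and only if $(d\Phi_x)_{g^{-1}K} = 0$. Thus $g$ is a critical point of $\Phi(x,\cdot)$ precisely when $g^{-1}K$ is a critical point of $\Phi_x$.

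For (a) $\Leftrightarrow$ (c): rewriting \eqref{cocycle} as $L_g^{*}\Phi_x = \Phi_{g^{-1}x} + c_g$ with $c_g := \Phi_x(gK)$ constant, and replacing $g$ by $g^{-1}$, I obtain $\Phi_x \circ L_{g^{-1}} = \Phi_{gx} + c_{g^{-1}}$. Since $L_{g^{-1}}$ is a diffeomorphism of $M$ carrying $o = eK$ to $g^{-1}K$ and the additive constant is irrelevant to critical points, $g^{-1}K$ is a critical point of $\Phi_x$ if and only if $o$ is a critical point of $\Phi_{gx}$. Applying the base-point identity to $gx$ yields $-(d\Phi_{gx})_o(\dot{\gamma}^\beta(0)) = \langle \mup(gx),\beta\rangle$ for every $\beta \in \liep$; because $\beta \mapsto \dot{\gamma}^\beta(0)$ identifies $\liep$ with $T_oM$ and $\scalo$ is positive definite on $\liep$, the vanishing of $(d\Phi_{gx})_o$ is equivalent to $\langle \mup(gx),\beta\rangle = 0$ for all $\beta$, that is, to $\mup(gx) = 0$. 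Chaining the two equivalences closes the loop.

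I expect the only delicate point to be the bookkeeping in the transport step: checking that the cocycle really produces the form $L_g^{*}\Phi_x = \Phi_{g^{-1}x} + (\text{const})$ and that the substitution $g \mapsto g^{-1}$ lands the relevant critical point exactly at $o$, so that the single base-point identity converts criticality into the momentum condition $\mup(gx)=0$. The remaining ingredients---the submersion argument for (b) $\Leftrightarrow$ (c) and the nondegeneracy of $\scalo$ on $\liep$---are routine.
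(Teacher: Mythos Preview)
Your argument is correct. The paper does not actually supply a proof of this proposition; it simply declares the result well known and refers the reader to \cite{Stability}. Your write-up is a clean, self-contained verification using precisely the ingredients already assembled in the paper---the left $K$-invariance of $\Phi(x,\cdot)$, the cocycle identity \eqref{cocycle}, and the base-point formula $-(d\Phi_x)_o(\dot{\gamma}^\beta(0)) = \langle \mup(x),\beta\rangle$---so there is no in-paper proof to compare against. The submersion argument for (b)\,$\Leftrightarrow$\,(c) and the transport of the critical point to $o$ via $L_{g^{-1}}$ for (a)\,$\Leftrightarrow$\,(c) are exactly the expected steps, and your bookkeeping with the cocycle (writing $L_g^*\Phi_x = \Phi_{g^{-1}x} + \text{const}$ and substituting $g\mapsto g^{-1}$) is handled correctly.
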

\begin{proposition}\label{propp}
Let $x\in X$.
\begin{itemize}
\item If $x$ is polystable, then $G_x$ is compatible.
\item If $x$  is stable, then $G_x$ is compact.
\end{itemize}
\end{proposition}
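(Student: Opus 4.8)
The plan is to reduce both statements to the normalized case $\mup(x)=0$ and then exploit the cocycle identity \eqref{cocycle} together with the convexity of the Kempf-Ness function. Since polystability and stability are properties of the whole orbit and $G\cdot x$ meets $\mup^{-1}(0)$, I carry out the analysis at a representative with $\mup(x)=0$; the stabilizer of any other representative is a $G$-conjugate of this one via $G_{g_0x}=g_0G_xg_0\meno$. For the first statement the aim is the sharp form $G_x=K_x\exp(\liep_x)$, where $K_x=G_x\cap K$ and $\liep_x=\lieg_x\cap\liep$; this is exactly compatibility in the sense of Section \ref{comp-subgrous}. As $K_x\exp(\liep_x)\subseteq G_x$ is clear, everything reduces to the reverse inclusion.

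So let $\mup(x)=0$. By Proposition \ref{polystable-I} the coset $o=eK$ is a critical point of the convex function $\Phi_x$ on $M$ (Lemma \ref{ConvexKempfness}), hence a global minimum, with $\Phi_x(o)=\Phi(x,e)=0$. First I would show $\Phi_x(gK)=0$ for every $g\in G_x$: the cocycle identity \eqref{cocycle} gives $L_g^{*}\Phi_x=\Phi_x+\Phi_x(gK)$ for $g\in G_x$, and since $L_g$ is a bijection of $M$ the two sides have the same infimum, forcing the constant $\Phi_x(gK)$ to vanish. Thus every such $gK$ lies in the minimum set of $\Phi_x$. Now write $g=k\exp(\beta)$ with $k\in K$, $\beta\in\liep$, and consider the geodesic $\sigma(t)=k\exp(t\beta)K$ from $o$ to $gK$. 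Both endpoints are global minima, so by convexity $\Phi_x\circ\sigma\equiv 0$ on $[0,1]$. Applying \eqref{cocycle} once more, $\Phi_{k\meno x}(\exp(t\beta)K)=\Phi_x(k\exp(t\beta)K)-\Phi_x(kK)\equiv 0$ on $[0,1]$, and $\mup(k\meno x)=\Ad(k\meno)\mup(x)=0$ by $K$-equivariance.

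The crux is the rigidity step. Differentiating the first-variation formula of Section \ref{Kempf-Ness Function} and using $\operatorname{grad}\mu_\liep^\beta=\beta_X$ yields $\tfrac{d^2}{dt^2}\Phi(y,\exp(t\beta))=\|\beta_X(\exp(t\beta)y)\|^2$. Since $\Phi_{k\meno x}(\exp(t\beta)K)$ is constant on an interval, its second derivative vanishes there, whence $\beta_X(\exp(t\beta)k\meno x)=0$ along that interval; equivalently $\exp(t\beta)$ fixes $k\meno x$ for all $t$, i.e. $\exp(\R\beta)\subseteq G_{k\meno x}$. This is precisely where property (c) of the Kempf-Ness function does the real work, and I expect it to be the main obstacle, as it requires upgrading ``second derivative zero on a segment'' to ``the one-parameter subgroup stabilizes the point''. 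It follows that $k\exp(t\beta)k\meno\in G_x$ for all $t$; combined with $k\exp(\beta)\in G_x$ a short computation gives $k\in G_x$, hence $k\in K_x$, and then $\exp(\beta)=k\meno g\in G_x$, so $\beta\in\liep_x$. Therefore $g\in K_x\exp(\liep_x)$, which establishes $G_x=K_x\exp(\liep_x)$ and the first statement.

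For the second statement, suppose $x$ is stable. Reducing again to $\mup(x)=0$ (compactness is preserved under the conjugation $G_x=g_0\meno G_{g_0x}g_0$), the first part gives $G_x=K_x\exp(\liep_x)$, so it remains to see $\liep_x=0$. Here I would use that $\lieg_x$ is conjugate to a subalgebra of $\liek$: any $\beta\in\liep_x\subseteq\liep$ is, in a closed embedding $G\hookrightarrow\Gl(N,\C)$ adapted to the Cartan decomposition, represented by a nonzero self-adjoint matrix, hence has a nonzero real eigenvalue, whereas an element of a conjugate of $\liek$ is represented by a skew-adjoint matrix with purely imaginary eigenvalues. Since eigenvalues are invariant under conjugation this forces $\beta=0$, so $\liep_x=0$ and $G_x=K_x\subseteq K$ is compact.
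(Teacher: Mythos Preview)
The paper states this proposition without proof (the reference to \cite{Stability} in the sentence preceding these two propositions pertains to Proposition~\ref{polystable-I}), so there is no in-paper argument to compare your approach against directly.

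Your argument at a point with $\mup(x)=0$ is correct and is the standard one: the cocycle identity forces $\Phi_x(gK)=0$ for every $g\in G_x$; convexity together with the global minimum at $o$ then gives $\Phi_x\circ\sigma\equiv 0$ along the geodesic from $o$ to $gK$; and property~(c) of the Kempf--Ness function upgrades this to $\exp(\R\beta)\subset G_{k^{-1}x}$, from which $k\in K_x$ and $\beta\in\liep_x$ follow. The eigenvalue argument for the second bullet is also sound, and the reduction there is legitimate since compactness is preserved under conjugation.

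There is, however, a gap in your reduction for the first bullet. You pass to a representative with $\mup(x)=0$ on the grounds that stabilizers along the orbit are $G$-conjugate, but compatibility is \emph{not} preserved under conjugation by elements of $G\setminus K$. By Lemma~\ref{lemcomp}(\ref{lemcomp1}) a closed subgroup of $G$ is compatible iff it is $\theta$-invariant with finitely many components, and $\theta$-invariance is in general destroyed by such conjugation: already in $\mathrm{SL}(2,\R)$ the diagonal split torus is compatible, while its conjugate by a nontrivial upper-triangular unipotent is not $\theta$-invariant. Thus from $G_y=K_y\exp(\liep_y)$ at $y\in\mup^{-1}(0)$ you cannot conclude $G_x=K_x\exp(\liep_x)$ for an arbitrary polystable $x$ in the same orbit. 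As written, your proof establishes the first item only under the extra hypothesis $\mup(x)=0$ --- which, it should be said, is the only case the paper ever invokes (see Lemma~\ref{compatible-Stabilizer}).
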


\subsubsection{Maximal Weight Function}

In this section, we introduce the maximal weight function associated with an element $x\in X.$

For any $t\in \mathbb{R},$ define $\la(x,\beta,t) = \langle\mu_\mathfrak{p}(\exp(t\beta)x), \beta\rangle.$
\begin{equation}\label{form1}
\la(x,\beta,t) = \langle\mu_\mathfrak{p}(\exp(t\beta)x), \beta\rangle = \frac{d}{dt}\Phi(x, \exp(t\beta)),
\end{equation}
 where $\Phi: X\times G\to \mathbb{R}$ is the Kempf-Ness function. By the properties of the Kempf-Ness function,
$$
\frac{d}{dt}\la(x,\beta,t) = \frac{d^2}{dt^2}\Phi(x, \exp(t\beta)) \geq 0.
$$
This means that $\la(x,\beta,t)$ is a non decreasing function as a function of $t.$

The maximal weight of $x\in X$ in the direction of $\beta \in \mathfrak{p}$ is defined in \cite{Stability} as the numerical value
$$
\la(x,\beta) = \lim_{t\to \infty}\la(x,\beta,t)=\lim_{t\to +\infty} \langle\mu_\mathfrak{p}(\exp(t\beta)x), \beta\rangle \in \mathbb{R}\cup\{\infty\}.
$$
Note that $$\frac{d}{dt}\la(x,\beta,t) = \parallel\beta_X(\exp(t\beta) x)\parallel^2,
$$ and so,
\begin{equation}\label{Equu}
\la(x,\beta,t) =  \langle\mu_\mathfrak{p}(x), \beta\rangle + \int_0^t\parallel\beta_X(\exp(s\beta) x)\parallel^2 \mathrm{ds}.
\end{equation}
\begin{lemma}\label{le1}
Let $\beta,\beta'\in \liep$ and let $x\in X$. If $\beta \in \lieg_x$ and $[\beta,\beta']=0$, then
\[
\lim_{t\to +\infty} \frac{d}{dt}\Phi(x, \exp(t(\beta+\beta'))=\lim_{t\to +\infty} \frac{d}{dt}\Phi(x, \exp(t\beta))+\frac{d}{dt}\Phi(x, \exp(t\beta')),
\]
\end{lemma}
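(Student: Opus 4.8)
The plan is to recognize both sides as maximal weights and reduce the claim to the additivity $\la(x,\beta+\beta')=\la(x,\beta)+\la(x,\beta')$. By \eqref{form1} the quantity $\frac{d}{dt}\Phi(x,\exp(t\beta))$ is exactly $\la(x,\beta,t)$, whose limit as $t\to+\infty$ is $\la(x,\beta)$, and likewise for $\beta'$ and for $\beta+\beta'$. Since $\beta\in\lieg_x$, the curve $\exp(s\beta)x$ is constantly equal to $x$, so $\beta_X(\exp(s\beta)x)=\beta_X(x)=0$ and the integral in \eqref{Equu} vanishes identically; hence $\la(x,\beta,t)=\langle\mup(x),\beta\rangle$ is constant in $t$ and, in particular, finite.

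The geometric heart of the argument is to control the geodesic $\exp(t(\beta+\beta'))x$. Because $[\beta,\beta']=0$ the flows commute, so $\exp(t(\beta+\beta'))=\exp(t\beta)\exp(t\beta')=\exp(t\beta')\exp(t\beta)$, and since $\exp(t\beta)$ fixes $x$ we obtain $\exp(t(\beta+\beta'))x=\exp(t\beta')x$ for every $t$. The key point, which I expect to be the main obstacle, is to show that the fundamental vector field $\beta_X$ vanishes along this whole curve, i.e. $\beta_X(\exp(s\beta')x)=0$ for all $s$. For this I would invoke the standard equivariance of fundamental vector fields for the $G$-action on $X$, namely $dL_g(\xi_X(y))=(\mathrm{Ad}(g)\xi)_X(g\cdot y)$, with $g=\exp(s\beta')$, $\xi=\beta$ and $y=x$: the left-hand side is $dL_{\exp(s\beta')}(\beta_X(x))=0$ because $\beta\in\lieg_x$, while on the right $\mathrm{Ad}(\exp(s\beta'))\beta=\beta$ since $[\beta,\beta']=0$, forcing $\beta_X(\exp(s\beta')x)=0$. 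This is precisely what kills the cross term that would otherwise obstruct additivity.

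With this vanishing in hand the computation closes cleanly. Evaluating $(\beta+\beta')_X=\beta_X+\beta'_X$ along $\exp(s(\beta+\beta'))x=\exp(s\beta')x$ gives $(\beta+\beta')_X(\exp(s\beta')x)=\beta'_X(\exp(s\beta')x)$, so the integrand in \eqref{Equu} for the direction $\beta+\beta'$ coincides with the one for $\beta'$. Combining this with $\langle\mup(x),\beta+\beta'\rangle=\langle\mup(x),\beta\rangle+\langle\mup(x),\beta'\rangle$, formula \eqref{Equu} yields $\la(x,\beta+\beta',t)=\langle\mup(x),\beta\rangle+\la(x,\beta',t)=\la(x,\beta,t)+\la(x,\beta',t)$ for every $t$, where I use that $\la(x,\beta,t)$ equals the constant $\langle\mup(x),\beta\rangle$. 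Passing to the limit $t\to+\infty$, the finiteness of $\la(x,\beta)$ rules out any $\infty-\infty$ ambiguity even when $\la(x,\beta')=+\infty$, and the asserted identity follows.
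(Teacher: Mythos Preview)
Your proof is correct, but it takes a different route from the paper's. The paper's argument is a one-liner using the cocycle condition (property (d) of the Kempf--Ness function): since $[\beta,\beta']=0$ one has $\exp(t(\beta+\beta'))=\exp(t\beta')\exp(t\beta)$, and then
\[
\Phi(x,\exp(t(\beta+\beta')))=\Phi(x,\exp(t\beta))+\Phi(\exp(t\beta)x,\exp(t\beta'))=\Phi(x,\exp(t\beta))+\Phi(x,\exp(t\beta')),
\]
the last equality because $\exp(t\beta)x=x$. Differentiating in $t$ and letting $t\to+\infty$ gives the lemma immediately. (The paper's displayed formula contains a few typos, but this is clearly the intended computation.)

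Your approach instead works entirely at the level of the derivative $\la(x,\cdot,t)$ via the integral formula \eqref{Equu}, and replaces the cocycle trick by the equivariance identity $dL_g(\xi_X(y))=(\Ad(g)\xi)_X(gy)$ to kill the cross term $\beta_X$ along the curve $\exp(s\beta')x$. This is perfectly valid and arguably more transparent about \emph{why} the additivity holds geometrically, but it is longer. The paper's cocycle argument has the advantage of being a single algebraic step and in fact yields the stronger pointwise identity for $\Phi$ itself (not just its $t$-derivative), which is what is actually used later in the proof of Theorem~\ref{main result}.
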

\begin{proof}
By the cocycle condition, keeping in mind that $[\beta,\beta']=0$, we have
\[
\Phi(x, \exp(t(\beta+\beta'))=\Phi(x,\exp(t\beta'))+\Phi(\exp(t\beta)x,\exp(t\beta'))=
\Phi(x,\exp(t\beta'))+\Phi(x,\exp(t\beta')),
\]
and so the result follows.
\end{proof}

\begin{lemma}\label{mylemma}
    Let $g\in G$ and $\beta \in \liep.$ If $\frac{\Phi_x(\exp(t\beta)g)}{t}$ is bounded uniformly on $t,$ then $$\lim_{t \to \infty}\frac{d_X(\exp(t\beta)gx, x)}{t^{\frac{1}{2}}} = 0.$$
\end{lemma}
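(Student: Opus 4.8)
The plan is to bound the distance by the length of the orbit curve, and then to reduce everything to the square-integrability of $\beta_X$ along that curve. Writing $y:=gx$, the curve $\sigma(s)=\exp(s\beta)y$ stays in $X$ (since $X$ is $G$-stable) and has velocity $\dot\sigma(s)=\beta_X(\sigma(s))$; hence by the triangle inequality
\[
d_X(\exp(t\beta)gx,x)\le d_X(gx,x)+\int_0^t\|\beta_X(\exp(s\beta)y)\|\,ds=:d_X(gx,x)+L(t).
\]
Since $d_X(gx,x)$ is a fixed constant, dividing by $t^{1/2}$ shows that it suffices to prove $L(t)/t^{1/2}\to0$. Note that no completeness of $X$ is used here: the length of a curve in $X$ always dominates the geodesic distance between its endpoints.

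Next I would show that the hypothesis forces $\int_0^\infty\|\beta_X(\exp(s\beta)gx)\|^2\,ds<\infty$, that is, that the maximal weight $\la(gx,\beta)$ is finite. By the cocycle condition,
\[
\Phi(gx,\exp(t\beta))=\Phi(x,\exp(t\beta)g)-\Phi(x,g),
\]
so the growth hypothesis on the Kempf--Ness function of $x$ along $\exp(t\beta)g$ becomes precisely the statement that $\Phi(gx,\exp(t\beta))/t$ is bounded above. The function $t\mapsto\Phi(gx,\exp(t\beta))$ equals $\Phi_{gx}(\exp(-t\beta)K)$, hence is the restriction of the convex function $\Phi_{gx}$ to a geodesic; by Lemma \ref{ConvexKempfness},
\[
\la(gx,\beta)=\lim_{t\to\infty}\frac{d}{dt}\Phi(gx,\exp(t\beta))=\lim_{t\to\infty}\frac{\Phi(gx,\exp(t\beta))}{t}<\infty.
\]
Feeding this into \eqref{Equu}, written at the base point $gx$, gives
\[
\int_0^\infty\|\beta_X(\exp(s\beta)gx)\|^2\,ds=\la(gx,\beta)-\langle\mup(gx),\beta\rangle<\infty.
\]

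Finally I would convert this $L^2$-bound into the desired sublinear growth of $L(t)$ by a tail Cauchy--Schwarz estimate. Given $\eps>0$, pick $T$ with $\int_T^\infty\|\beta_X(\exp(s\beta)gx)\|^2\,ds<\eps^2$. For $t>T$,
\[
\int_T^t\|\beta_X(\exp(s\beta)gx)\|\,ds\le(t-T)^{1/2}\Big(\int_T^t\|\beta_X(\exp(s\beta)gx)\|^2\,ds\Big)^{1/2}\le\eps\,t^{1/2},
\]
while $\int_0^T\|\beta_X(\exp(s\beta)gx)\|\,ds$ is a constant. Dividing $L(t)$ by $t^{1/2}$ and letting $t\to\infty$ gives $\limsup_{t\to\infty}L(t)/t^{1/2}\le\eps$; since $\eps$ was arbitrary, $L(t)/t^{1/2}\to0$, which together with the first paragraph proves the lemma.

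The delicate point is the second step. On the one hand one must match conventions carefully --- the inverse in the definition $\Phi_x(g)=\Phi(x,g^{-1})$ and the order of factors in the cocycle --- so that the stated hypothesis on $\Phi_x(\exp(t\beta)g)/t$ really becomes a bound on $\Phi(gx,\exp(t\beta))/t$. On the other hand, the genuinely substantive input is the convexity of the Kempf--Ness function (Lemma \ref{ConvexKempfness}), which is what upgrades a mere linear growth bound into the \emph{finiteness} of the maximal weight $\la(gx,\beta)$, equivalently the \emph{convergence} of $\int_0^\infty\|\beta_X\|^2$. This convergence, and not just a linear bound on $\int_0^t\|\beta_X\|^2$, is essential: the Cauchy--Schwarz tail estimate yields $o(t^{1/2})$ precisely because the integral over $[0,\infty)$ is finite, whereas a linear bound on $\int_0^t\|\beta_X\|^2$ would only give the useless $L(t)=O(t)$.
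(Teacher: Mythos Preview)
Your proof is correct and follows essentially the same route as the paper's: use the cocycle condition to reduce the hypothesis to finiteness of $\lambda(gx,\beta)$, invoke \eqref{Equu} to obtain $\int_0^\infty\|\beta_X(\exp(s\beta)gx)\|^2\,ds<\infty$, bound $d_X(\exp(t\beta)gx,gx)$ by the arc-length integral, and finish with the triangle inequality. The only cosmetic difference is that the paper outsources your Cauchy--Schwarz tail estimate to \cite[Lemma 3.2]{MUNDETT}, whereas you write it out explicitly; your remark that convexity (Lemma \ref{ConvexKempfness}) is the substantive input converting a linear growth bound into genuine convergence of the $L^2$-integral is exactly the point, and the paper uses it implicitly when identifying the limit with $\lambda(gx,\beta)$.
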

\begin{proof} By the property (d) of the Kempf-Ness function and noting that $\lim_{t \to \infty}\frac{\Phi_x(g)}{t} = 0$ since $\Phi_x(g)$ is independent of $t$, we have
\begin{align*}
    \lim_{t \to \infty}\frac{\Phi_x(\exp(t\beta)g)}{t} &= \lim_{t \to \infty}\frac{\Phi_x(g) + \Phi_{gx}(\exp(t\beta))}{t}\\
    &=\lim_{t\to \infty}\frac{\Phi_{gx}(\exp(t\beta))}{t} \\
    &= \lambda(gx, \beta).
\end{align*} If $\frac{\Phi_x(\exp(t\beta)g)}{t}$ is bounded uniformly on $t,$ then by (\ref{Equu}), $\int_0^\infty\parallel \beta_X(\exp(s\beta)gx)\parallel^2 ds < \infty.$ On the other hand,
$$d_X(\exp(t\beta)gx, gx) \leq \int_0^t \parallel \beta_X(\exp(s\beta)gx)\parallel ds.$$ Let $f(s) = \parallel \beta_X(\exp(s\beta)gx)\parallel,$ applying \cite[Lemma 3.2]{MUNDETT} to $f(s)$ implies that $$\lim_{t\to \infty}\frac{d_X(\exp(t\beta)gx, gx)}{t^{\frac{1}{2}}} = 0,$$ and by triangle inequality, $$\lim_{t\to \infty}\frac{d_X(\exp(t\beta)gx, x)}{t^{\frac{1}{2}}} = 0.$$
\end{proof}

For $x\in X,$ let the function $\lambda_x$ be given as

\begin{equation}\label{mmfunction}
\lambda_x : \partial_\infty M \to \R; \quad \lambda_x([\gamma]) := \lim_{t\to \infty}\frac{d}{dt}\Phi_x(\gamma(t)) = \lim_{t\to \infty}\frac{\Phi_x\circ \gamma}{t},
\end{equation} where the last equality follows from Lemma \ref{ConvexKempfness}. We need to show that the function $\lambda_x$ is well defined using the technical condition in assumption \ref{assumption1}. We first derive a quadratic bound on the gradient map.

\begin{lemma}\label{quadratic bound}
    Let $x_0\in X$ be the point satisfying (\ref{assumption}). There exists a constant $C_1$ such that for any $x\in X$ we have $$\parallel\mu_\liep(x)\parallel \leq C_1(1 + d_X(x_0, x)^2).$$
\end{lemma}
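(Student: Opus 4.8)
The plan is to bound $\|\mu_\liep(x)\|$ by relating the gradient map at an arbitrary point $x$ to its value at the base point $x_0$ via integration along a geodesic, and then to control the integrand using the linear growth hypothesis (\ref{assumption}). Concretely, I would fix $x \in X$ and choose a minimizing geodesic in $X$ from $x_0$ to $x$ (or, more robustly, avoid geodesics in $X$ altogether and argue directly through the defining property $\operatorname{grad} \mu_\liep^\beta = \beta_X$). The key identity is that for a unit vector $\beta \in \liep$ of norm $1$, the function $\mu_\liep^\beta = \langle \mu_\liep, \beta\rangle$ has gradient $\beta_X$, so the size of $\mu_\liep$ is governed by how much $\mu_\liep^\beta$ can change as one moves through $X$, which in turn is controlled by $\|\beta_X\|$.

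First I would write, for each $x$, $\|\mu_\liep(x)\| = \sup_{\|\beta\|=1}\langle \mu_\liep(x),\beta\rangle = \sup_{\|\beta\|=1}\mu_\liep^\beta(x)$. Fix such a maximizing $\beta$ and a smooth path $c:[0,L]\to X$ from $x_0=c(0)$ to $x=c(L)$ parametrized by arclength with $L = d_X(x_0,x)$ (up to an arbitrarily small error if no minimizing geodesic exists in the locally closed submanifold). Then
\begin{equation*}
\mu_\liep^\beta(x) = \mu_\liep^\beta(x_0) + \int_0^L \frac{d}{ds}\mu_\liep^\beta(c(s))\, ds = \mu_\liep^\beta(x_0) + \int_0^L \langle \operatorname{grad}\mu_\liep^\beta(c(s)), \dot c(s)\rangle\, ds.
\end{equation*}
Since $\operatorname{grad}\mu_\liep^\beta = \beta_X$ and $\|\dot c(s)\| = 1$, Cauchy--Schwarz gives $\bigl|\tfrac{d}{ds}\mu_\liep^\beta(c(s))\bigr| \le \|\beta_X(c(s))\|$. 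Now apply Assumption \ref{assumption1}: since $\|\beta\|=1$, we get $\|\beta_X(c(s))\| \le C(1 + d_X(x_0, c(s))) \le C(1+s)$, using that $c$ is a unit-speed path starting at $x_0$, so $d_X(x_0,c(s)) \le s$.

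Combining these, $\mu_\liep^\beta(x) \le \|\mu_\liep(x_0)\| + \int_0^L C(1+s)\, ds = \|\mu_\liep(x_0)\| + C\bigl(L + \tfrac{L^2}{2}\bigr)$. Writing $L = d_X(x_0,x)$ and setting $M_0 := \|\mu_\liep(x_0)\|$, the elementary bound $L + \tfrac{L^2}{2} \le C'(1 + d_X(x_0,x)^2)$ (valid since a linear term is dominated by a constant plus a quadratic term) yields $\|\mu_\liep(x)\| \le C_1(1 + d_X(x_0,x)^2)$ for a suitable $C_1$ absorbing $M_0$, $C$, and the numerical constant $C'$. The bound is uniform in $\beta$, so taking the supremum over $\|\beta\|=1$ preserves the estimate. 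I expect the main technical obstacle to be the existence and regularity of the connecting path realizing (or approximating) the geodesic distance inside the locally closed submanifold $X$: one must ensure the path stays in $X$ and is unit-speed so that the estimate $d_X(x_0,c(s)) \le s$ holds. This is handled by approximating $d_X(x_0,x)$ by lengths of smooth paths in $X$ and passing to the infimum, the continuity of $\mu_\liep$ guaranteeing that the resulting inequality survives the limit.
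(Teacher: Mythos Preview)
Your proposal is correct and follows essentially the same argument as the paper: integrate $\mu_\liep^\beta$ along an arc-length parametrized path from $x_0$ to $x$, use $\operatorname{grad}\mu_\liep^\beta=\beta_X$ together with Cauchy--Schwarz, and apply Assumption~\ref{assumption1} to bound the integrand by $C(1+s)$. The only cosmetic difference is that the paper sidesteps your approximation argument by taking from the outset a path of length $2d$ (which certainly exists since $d=d_X(x_0,x)$ is an infimum of path lengths), so the integral runs over $[0,2d]$ rather than $[0,L]$ with $L\to d$.
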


\begin{proof}
    For any $x\in X$, let $d = d_X(x_0, x).$ By (\ref{assumption}), there exists a constant $C > 0$ such that for any $x\in X$ and any $\beta \in \liep,$ $\parallel\beta_X(x)\parallel \leq C \parallel\beta\parallel(1+d).$ Let $\alpha : [0, 2d] \to X$ be a smooth curve parametrized by the arc length and such that $\alpha(0) = x_0$ and $\alpha(2d) = x.$ For any $\beta \in \liep,$ let $f(t) := \langle \mu_\liep(\alpha(t)), \beta\rangle.$ Then by the defining properties of gradient map and since $\parallel\Dot{\alpha}\parallel = 1$
\begin{align*}
    f'(t) &= \langle d\mu_\liep(\alpha(t))\Dot{\alpha}, \beta\rangle\\
    &= \langle \beta_X(\alpha(t)), \Dot{\alpha}\rangle\\
    &\leq \parallel\beta_X(\alpha(t))\parallel\\
    & \leq C \parallel\beta\parallel(1+d_X(x_0, \alpha(t)).
\end{align*}
Hence,
\begin{align*}
    f(2d) - f(0) &= \int_0^{2d} f'(t)dt \leq \int_0^{2d} C \parallel\beta\parallel(1+d_X(x_0, \alpha(t))dt\\
    & \leq \int_0^{2d}  C \parallel\beta\parallel(1+t)dt\\
    & = 2C\parallel\beta\parallel(d + d^2)\\
    &\leq C'\parallel\beta\parallel(1 + d^2),
\end{align*} where the constant $C'$ can be chosen independent of $x.$ Hence the result follows.
\end{proof}

\begin{proposition}
The function $\lambda_x : \partial_\infty M \to \R$ defined by (\ref{mmfunction}) is well defined.
\end{proposition}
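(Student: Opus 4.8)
The plan is to read ``well defined'' as two separate assertions and to dispatch them in turn: first, that for a single geodesic ray the limit in \eqref{mmfunction} exists in $\R\cup\{+\infty\}$ and that the two expressions for it coincide; second, that the resulting number depends only on the class $[\gamma]\in\partial_\infty M$, i.e.\ it is unchanged when $\gamma$ is replaced by an asymptotic geodesic. The first assertion is immediate from Lemma \ref{ConvexKempfness}: for fixed $\gamma$ the function $\Phi_x\circ\gamma$ is convex, so $t\mapsto\tfrac{d}{dt}(\Phi_x\circ\gamma)(t)$ is non-decreasing and therefore has a limit, which Lemma \ref{ConvexKempfness} identifies with $\lim_{t\to\infty}\tfrac{\Phi_x\circ\gamma}{t}$. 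Hence the entire content of the proposition is the independence of the representative, and this is what Assumption \ref{assumption1} (through Lemma \ref{quadratic bound}) is needed for.

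For the independence, let $\gamma_1,\gamma_2$ be unit-speed rays with $[\gamma_1]=[\gamma_2]$, so that $D:=\sup_{t>0}d(\gamma_1(t),\gamma_2(t))<\infty$, and set $L_i:=\lim_{t\to\infty}\tfrac{\Phi_x(\gamma_i(t))}{t}$. I would compare $L_1$ and $L_2$ by the first-order convexity inequality for $\Phi_x$ along the minimizing geodesic from $\gamma_2(t)$ to $\gamma_1(t)$, whose length is at most $D$:
\[
\Phi_x(\gamma_1(t))\ \ge\ \Phi_x(\gamma_2(t))-D\,\|\mathrm{grad}\,\Phi_x(\gamma_2(t))\|,
\]
together with the symmetric inequality obtained by exchanging $\gamma_1$ and $\gamma_2$. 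Dividing by $t$ and letting $t\to\infty$, the equality $L_1=L_2$ will drop out provided the gradient grows sublinearly along the rays, i.e.\ $\|\mathrm{grad}\,\Phi_x(\gamma_i(t))\|=o(t)$.

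The whole difficulty is therefore this sublinear gradient bound. The key identity, coming from the cocycle condition \eqref{cocycle} and the $G$-invariance of the metric (which give $\Phi_x\circ L_h=\Phi_{h^{-1}x}+\text{const}$), is that for $\gamma_i(t)=h_tK$ one has $\|\mathrm{grad}\,\Phi_x(\gamma_i(t))\|=\|\mu_\liep(h_t^{-1}x)\|$. By Lemma \ref{quadratic bound} this is $\le C_1\bigl(1+d_X(x_0,h_t^{-1}x)^2\bigr)$, so it suffices to control $d_X(x_0,h_t^{-1}x)$. Here I would exploit convexity: if $L_i<\infty$, then $\Phi_x(\gamma_i(t))/t$ is bounded, so Lemma \ref{mylemma} (applied to the orbit point $h_t^{-1}x$, which is a flow line of $\exp(t\beta)$ after the appropriate sign change) yields $d_X(h_t^{-1}x,\,\cdot)=o(t^{1/2})$; by the triangle inequality $d_X(x_0,h_t^{-1}x)^2=o(t)$, and hence $\|\mathrm{grad}\,\Phi_x(\gamma_i(t))\|=o(t)$, exactly the bound required. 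Feeding this into the two convexity inequalities shows that if \emph{both} $L_1,L_2$ are finite they are equal.

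The step I expect to be the genuine obstacle is ruling out the mixed case $L_1<\infty$, $L_2=+\infty$. The convexity inequalities above only supply the gradient control on the ray along which the slope is finite, and an inspection shows they are consistent with a mixed pair, so they cannot by themselves force the slopes to be simultaneously finite or infinite; the a priori estimate one gets from Assumption \ref{assumption1} alone is only a Gr\"onwall (exponential) bound on $d_X$ along the flow, which is useless after dividing by $t$. To close this gap I would invoke the basepoint-independence of the asymptotic slope (recession function) of a continuous convex function on a Hadamard manifold, which belongs to the same circle of ideas as Lemma \ref{ConvexKempfness} and can be cited from \cite{bgs}; this guarantees that the two slopes are finite or infinite together, after which the finite-case argument of the previous paragraph finishes the proof. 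Assembling the finite case (via Lemmas \ref{quadratic bound} and \ref{mylemma}) with this dichotomy, rather than any single estimate, is where the argument must be handled with care.
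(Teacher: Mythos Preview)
Your treatment of the mixed case is where the argument collapses into circularity. You propose to rule out $L_1<\infty$, $L_2=+\infty$ by citing basepoint-independence of the asymptotic slope of a convex function on a Hadamard manifold. But $\Phi_x$ \emph{is} geodesically convex on $M$ (Lemma \ref{ConvexKempfness}), so that general fact is precisely the proposition: it would give $L_1=L_2$ outright, with no role for Assumption \ref{assumption1}, Lemma \ref{mylemma} or Lemma \ref{quadratic bound}. Either you are citing the conclusion, or all of your preceding work is superfluous. In neither reading has the argument stood on its own.

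The paper closes the mixed case self-containedly, and the mechanism is exactly the Gr\"onwall estimate you dismissed as ``useless after dividing by $t$''. The point you missed is that the Gr\"onwall is not run longitudinally along $t\mapsto\exp(t\beta)$ (length $\sim t$, factor $e^{Ct}$) but \emph{transversally}, along a path $\phi_t:[1,2]\to G$ lifting the short geodesic in $M$ from $\gamma_1(t)$ to $\gamma_2(t)$; that path has length $\le Q$ uniformly in $t$, so the Gr\"onwall factor is a fixed constant. Concretely: from $L_1<\infty$ alone, Lemma \ref{mylemma} gives $d_X(\phi_t(1)x,x)=o(t^{1/2})$; Assumption \ref{assumption1} plus Gr\"onwall along $\phi_t$ propagate this to $d_X(\phi_t(r)x,x)=o(t^{1/2})$ for \emph{every} $r\in[1,2]$; Lemma \ref{quadratic bound} then gives $\|\mu_\liep(\phi_t(r)x)\|=o(t)$ along the whole path, and integrating in $r$ yields $|\Phi_x(\gamma_1(t))-\Phi_x(\gamma_2(t))|=o(t)$. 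This forces $L_2=L_1<\infty$ directly, so no separate dichotomy step is needed.

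The structural difference is that your endpoint-gradient inequalities are each contingent on finiteness of the corresponding $L_i$ and cannot be chained; the paper's path-integral estimate propagates the $o(t^{1/2})$ control from one endpoint across to the other, and that transversal propagation is the missing idea.
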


\begin{proof}
    The function $\lambda_x$ is well defined means that if $\gamma_1$ and $\gamma_2$ are geodesic rays such that $\gamma_1 \sim \gamma_2,$ then $\lambda_x(\gamma_1) = \lambda_x(\gamma_2).$ Let $\gamma_i(t) = [g_i\exp(t\beta_i)]$ for $i = 1,2.$  Suppose $\lambda(\gamma_1) < \infty.$

    $$
    \lim_{t\to \infty}\frac{\Phi_x\circ \gamma_1}{t} = \lim_{t\to \infty}\frac{\Phi_x(g_1\exp(t\beta_1)K)}{t} = \lim_{t\to \infty}\frac{\Phi_x(\exp(-t\beta_1)g^{-1}_1)}{t} < \infty.
    $$

    Lemma \ref{mylemma} implies that $$\lim_{t\to \infty}\frac{d_X(\exp(-t\beta_1)g^{-1}_1x, x)}{t^{\frac{1}{2}}} = 0.$$ Since $\gamma_1 \sim \gamma_2,$ then $d(\gamma_1(t), \gamma_2(t)) \leq Q$ uniformly on $t.$ Hence, for any $t$ one can consider a smooth map $\phi_t : [1, 2] \to G$ such that $\phi_t(i) = \exp(-t\beta_i)g_i^{-1}$ for $i = 1,2$ and
    \begin{equation}\label{smooth map}
    \int_1^2 \parallel\phi'_t(r) \phi_t(r)^{-1}\parallel dr \leq Q.
    \end{equation} Note that $Q$ is independent of $t.$

    We claim that there exists some constant $C'$ (depending on $x$) such that, for any $t$ and any $r\in [1,2],$
$$
d_X(\exp(-t\beta_1)g_1^{-1}x, \phi_t(r)x) \leq C'(1 + d_X(\exp(-t\beta_1)g_1^{-1}x, x)).
$$ Indeed, fix $t$ and let $c(u) = \phi_t(u)x$ and $\xi(u) = \parallel \phi'_t(u) \phi_t(u)^{-1}\parallel.$ Define $f(s) = d_X (c(1), c(s)).$ By assumption \ref{assumption1},

\begin{align*}
\parallel f'(s) \parallel \leq \parallel c'(s) \parallel &= \parallel (\phi'_t(s) \phi_t(s)^{-1})_X(c(s)) \parallel\\
&\leq C \parallel \phi'_t(s) \phi_t(s)^{-1} \parallel (1 + d_X(x_0, c(s)))\\
&\leq C\xi(s)(1 + d_X(x_0, x) + d_X(x, c(1)) + d_X(c(1), c(s)))\\
& = C\xi(s)(1 + d_X(x_0, x) + d_X(x, c(1)) + f(s)).
\end{align*}

If $g(s) = 1 + d_X(x_0, x) + d_X(x, c(1)) + f(s),$ then $g'(s) = f'(s)$ and $\parallel g'(s)/g(s)\parallel \leq C\xi(s).$ Integrating over $s\in [1, r],$ implies that there is a uniform bound $g(r) \leq CQg(1).$ Note that $d_X(x, c(1)) = d_X(x,  \exp(-t\beta_1)g_1^{-1}x).$ The claim follows.

Using triangular inequality and by the claim, we have
\begin{align*}
d_X(\phi_t(r)x, x) &\leq d_X(\phi_t(r)x, \exp(-t\beta_1)g_1^{-1}x) + d_X(\exp(-t\beta_1)g_1^{-1}x, x)\\
 &= C'(1 + d_X(\exp(-t\beta_1)g_1^{-1}x, x)) + d_X(\exp(-t\beta_1)g_1^{-1}x, x).
\end{align*} So that

$$
\lim_{t\to \infty}\frac{d_X(\phi_t(r)x, x)}{t^{\frac{1}{2}}} = 0
$$ and by triangular inequality again,

\begin{equation}\label{distance}
\lim_{t\to \infty}\frac{d_X(\phi_t(r)x, x_0)}{t^{\frac{1}{2}}} = 0.
\end{equation}

By Lemma \ref{quadratic bound}, $$\parallel\mu_\liep(\phi_t(r)x)\parallel \leq C_1(1 + d_X(x_0, \phi_t(r)x)^2),$$ and by (\ref{distance}),
$$
\lim_{t\to 0}\frac{\parallel\mu_\liep(\phi_t(r)x)\parallel}{t} = 0.
$$

This implies that
\begin{align*}
    \lim_{t\to \infty}\frac{\parallel \Phi_x(\gamma_1) - \Phi_x(\gamma_2)\parallel}{t} = 0
\end{align*} Therefore, $\lambda_x(\gamma_1) = \lambda_x(\gamma_2).$ If $\lambda_x(\gamma_1) = \infty,$ then $\lambda_x(\gamma_2) = \infty.$ Otherwise, reversing the roles of $\gamma_1$ and $\gamma_2$ in the arguments above shows that $\lambda_x(\gamma_1) = \lambda_x(\gamma_2) < \infty,$ which is a contradiction. Concluding the proof.

\end{proof}

\begin{definition}
The maximal weight function of $x\in X$ is the well defined map $\lambda_x : \partial_\infty M \to \R \cup \{\infty\}$ given as
$$
\la_x(p) := \la_x(\gamma) = \lim_{t\to \infty}\frac{d}{dt}\Phi_x(\gamma(t))
$$ for any $p\in \partial_\infty M,$ where $\gamma$ is any geodesic ray representing $p.$
\end{definition}
It follows from the definition that for any $\beta \in S(\liep)$, keeping in mind formula $(\ref{form1})$, we have
\begin{equation}\label{formula}
\lambda_x(e(\beta)) =  \lim_{t\to \infty}\frac{d}{dt}\Phi_x(\exp(t\beta)K) = \lim_{t\to \infty}\frac{d}{dt}\Phi(x, \exp(-t\beta))=\lim_{t\to +\infty} \langle \mup(\exp(t\beta)x,-\beta \rangle.
\end{equation}

\begin{lemma}\label{le3}
Let $\beta \in \liep\setminus\{0\}$ and let $v=\frac{\beta}{\parallel \beta \parallel}$. Then
\[
\lambda_x (e(v))= \frac{1}{\parallel \beta \parallel} \lim_{t\to +\infty} \frac{d}{dt}\Phi(x, \exp(-t\beta)).
\]
\end{lemma}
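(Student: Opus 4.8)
The plan is to derive the formula from the already-established identity (\ref{formula}) by a single change of scale in the one-parameter subgroup, using that the curve $t \mapsto \exp(-t\beta)$ is obtained from $t \mapsto \exp(-tv)$ by reparametrizing time by the factor $\parallel \beta \parallel$. Since $v = \beta / \parallel \beta \parallel$ is a unit vector in $\liep$, formula (\ref{formula}) applies verbatim to $v$ and yields
\[
\lambda_x(e(v)) = \lim_{t\to \infty}\frac{d}{dt}\Phi(x, \exp(-tv)),
\]
where the limit exists in $\R \cup \{\infty\}$ because $t \mapsto \Phi(x,\exp(-tv)) = \Phi_x(\exp(tv)K)$ is convex by Lemma \ref{ConvexKempfness}, so its derivative is nondecreasing. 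Thus everything reduces to comparing this limit with the corresponding one written in terms of $\beta$.

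First I would record the elementary identity coming from $\beta = \parallel \beta \parallel\, v$: since $-t\beta = -(t\parallel \beta \parallel)\,v$, we have $\Phi(x, \exp(-t\beta)) = \Phi(x, \exp(-sv))$ with $s = \parallel \beta \parallel\, t$. Writing $f(t) := \Phi(x, \exp(-t\beta))$ and $h(s) := \Phi(x, \exp(-sv))$, this is simply $f(t) = h(\parallel \beta \parallel\, t)$, and the chain rule gives
\[
\frac{d}{dt}\Phi(x, \exp(-t\beta)) = \parallel \beta \parallel\,\frac{d}{ds}\Phi(x, \exp(-sv))\Big|_{s = \parallel \beta \parallel\, t}.
\]

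Finally I would let $t \to \infty$. Because $\parallel \beta \parallel > 0$, the rescaled parameter $s = \parallel \beta \parallel\, t$ also tends to $+\infty$, so passing to the limit and invoking the instance of (\ref{formula}) recorded in the first paragraph gives
\[
\lim_{t\to \infty}\frac{d}{dt}\Phi(x, \exp(-t\beta)) = \parallel \beta \parallel\,\lim_{s\to \infty}\frac{d}{ds}\Phi(x, \exp(-sv)) = \parallel \beta \parallel\,\lambda_x(e(v)),
\]
and dividing by $\parallel \beta \parallel$ yields the claim. I do not expect any genuine obstacle: this is purely a homogeneity argument. The only point worth noting is that the identity persists when $\lambda_x(e(v)) = \infty$, in which case the limit for $\beta$ is likewise $+\infty$ and both sides agree; the existence of all the limits in $\R \cup \{\infty\}$ is guaranteed throughout by the convexity furnished by Lemma \ref{ConvexKempfness}.
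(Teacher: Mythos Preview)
Your proof is correct and follows essentially the same approach as the paper: both arguments reduce to the observation that $\exp(-t\beta)=\exp(-sv)$ with $s=\parallel\beta\parallel t$, and then apply the chain rule together with formula (\ref{formula}). Your write-up is simply more explicit about the reparametrization and the passage to the limit (including the $+\infty$ case), whereas the paper records this in a single line.
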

\begin{proof}
$\Phi_x (\exp(tv))=\Phi_x (\exp \big(\frac{t}{\parallel \beta \parallel} \beta \big))$. Then
\[
\lambda_x (e(\beta))=\frac{1}{\parallel \beta \parallel} \lim_{t\to +\infty} \frac{d}{dt}\Phi(x, \exp(-t\beta)).
\]
\end{proof}

\begin{lemma}
For any $x\in X,$ any $g\in G$ and any $p\in \partial_\infty M,$
\begin{equation}\label{Ginvariant}
    \la_{g^{-1}x}(p) = \la_x(g\cdot p).
\end{equation}
\end{lemma}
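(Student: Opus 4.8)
The plan is to unwind the definitions of the maximal weight function and the $G$-action on $\partial_\infty M$, and then reduce everything to the cocycle identity \eqref{cocycle}; the whole statement turns out to be a direct consequence of that identity together with the fact that $\la_x$ is well defined.

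First I would fix a geodesic ray $\gamma$ representing $p$ and write $\gamma(t) = h_t K$ for a path $t\mapsto h_t$ in $G$. Recall that when the $G$-action on $\partial_\infty M$ was introduced it was noted that $G$ acts by isometries on $M$, sending unit-speed geodesics to unit-speed geodesics and preserving the equivalence relation $\sim$; hence the ray $g\gamma$ given by $(g\gamma)(t) = L_g(\gamma(t)) = g h_t K$ is again a geodesic ray, and by definition of the action it represents $g\cdot p$. Since $\la_x$ is well defined, i.e. independent of the chosen representative of a point of $\partial_\infty M$ (by the proposition preceding the definition of the maximal weight function), I may compute $\la_x(g\cdot p)$ using this particular representative $g\gamma$.

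The key step is then the cocycle identity \eqref{cocycle}, applied with $h = h_t$:
$$
\Phi_x\big(g h_t K\big) = \Phi_{g^{-1}x}\big(h_t K\big) + \Phi_x(gK).
$$
The decisive observation is that the term $\Phi_x(gK)$ is a constant independent of $t$, so it disappears upon differentiation:
$$
\frac{d}{dt}\Phi_x\big((g\gamma)(t)\big) = \frac{d}{dt}\Phi_{g^{-1}x}\big(\gamma(t)\big).
$$
Passing to the limit $t\to\infty$ and using the definition \eqref{mmfunction} of the maximal weight on both sides gives
$$
\la_x(g\cdot p) = \lim_{t\to\infty}\frac{d}{dt}\Phi_x\big((g\gamma)(t)\big) = \lim_{t\to\infty}\frac{d}{dt}\Phi_{g^{-1}x}\big(\gamma(t)\big) = \la_{g^{-1}x}(p),
$$
with the convention that if one side equals $+\infty$ then so does the other (both limits are of nondecreasing quantities, so this case causes no trouble).

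There is no serious obstacle here: the only points requiring care are the verification that $g\gamma$ genuinely represents $g\cdot p$ and is a bona fide geodesic ray, and the appeal to well-definedness of $\la_x$ to justify replacing an arbitrary representative of $g\cdot p$ by $g\gamma$ — both of which are already established earlier in the section. Once these are in hand, the identity reduces to the triviality that the derivative in $t$ of the $t$-independent constant $\Phi_x(gK)$ vanishes.
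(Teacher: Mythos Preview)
Your proof is correct and follows essentially the same route as the paper: choose a geodesic representative $\gamma$ of $p$, note that $g\gamma$ represents $g\cdot p$, apply the cocycle identity \eqref{cocycle} to get $\Phi_x(g\gamma(t)) = \Phi_{g^{-1}x}(\gamma(t)) + \Phi_x(gK)$, and observe that the constant term vanishes upon differentiation in $t$. Your write-up is slightly more explicit about well-definedness and the $+\infty$ case, but the argument is the same.
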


\begin{proof}
Let $p = [\gamma]$ for some geodesic $\gamma \in M.$ Then $g\cdot p = [g\circ \gamma].$ By the cocycle condition,

\begin{align*}
    \lambda_{g^{-1}x}(p) &= \lim_{t\to \infty}\frac{d}{dt} (\Phi_{g^{-1}x}(\gamma(t))\\
    &= \lim_{t\to \infty}\frac{d}{dt} (\Phi_{x}(g \cdot \gamma(t)) - \Phi_{x}(gK))\\
    &= \lim_{t\to \infty}\frac{d}{dt} (\Phi_{x}(g \cdot \gamma(t))) = \lambda_x(g\cdot p).
\end{align*}
\end{proof}
The following lemma will be needed. A proof is given in \cite[Lemma 3.5, p. 92]{Stability}, see also \cite{Teleman}.
\begin{lemma}\label{le2}
Let $V$ be a subspace of $\liep$. The following are equivalent for a point $x\in X$:
\begin{enumerate}
\item the map $\Phi(x,\cdot)$ is linearly properly on $V$, i.e., there exists positive constants $C_1$ and $C_2$ such that
    $$
    \parallel v \parallel \leq C_1 \Phi(x,\exp(v))+C_2,\, \forall v\in V.
    $$
\item $\lambda(x,\beta)>0$ for every $\beta\in S(V)$.
\end{enumerate}
\end{lemma}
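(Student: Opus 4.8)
The plan is to prove the two implications separately. The implication (a) $\Rightarrow$ (b) is a one-line substitution, so I would dispose of it first. Fix $\beta \in S(V)$ and substitute $v = t\beta$ with $t > 0$ into the linear properness inequality; since $\parallel t\beta \parallel = t$ this gives $t \leq C_1 \Phi(x,\exp(t\beta)) + C_2$, hence
\[
\frac{\Phi(x,\exp(t\beta))}{t} \geq \frac{1}{C_1}\Bigl(1 - \frac{C_2}{t}\Bigr).
\]
Letting $t \to \infty$ and using Lemma \ref{ConvexKempfness} to identify $\lim_{t\to\infty}\Phi(x,\exp(t\beta))/t$ with $\lambda(x,\beta)$, I conclude $\lambda(x,\beta) \geq 1/C_1 > 0$ for every $\beta \in S(V)$.

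The real content is (b) $\Rightarrow$ (a), and I would organize it around the function $h(\beta,t) := \Phi(x,\exp(t\beta))/t$ for $\beta \in S(V)$ and $t > 0$. Two structural facts set the stage. First, $f_\beta(t) := \Phi(x,\exp(t\beta))$ is convex in $t$ (its second derivative is $\frac{d}{dt}\lambda(x,\beta,t) \geq 0$) and vanishes at $t=0$ by the cocycle condition ($\Phi(x,e)=0$); therefore its secant slopes from the origin are non-decreasing in $t$, and $\sup_{t>0} h(\beta,t) = \lim_{t\to\infty} h(\beta,t) = \lambda(x,\beta)$, exactly as in Lemma \ref{ConvexKempfness}. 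Second, $\lambda(x,\beta) = \sup_{t} \langle \mup(\exp(t\beta)x),\beta\rangle$ is a supremum of functions continuous in $\beta$, hence lower semicontinuous on the compact sphere $S(V)$; being $(0,\infty]$-valued by hypothesis (b), it attains a strictly positive infimum $m_0 := \min_{\beta \in S(V)} \lambda(x,\beta) \in (0,\infty]$.

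The crux, and the step I expect to be the main obstacle, is upgrading this \emph{pointwise} positivity to a \emph{uniform} linear lower bound; Dini's theorem does not apply directly because the limit $\lambda(x,\cdot)$ is only lower semicontinuous, so I would argue by a finite-cover argument that exploits monotonicity in $t$. Fix any finite $c$ with $0 < c < m_0$. For each $\beta_0 \in S(V)$ choose $T_{\beta_0}$ with $h(\beta_0,T_{\beta_0}) > c$, which is possible since $h(\beta_0,t) \to \lambda(x,\beta_0) \geq m_0 > c$. By joint continuity of $h$ (smoothness of $\Phi(x,\cdot)$ and of $(\beta,t)\mapsto \exp(t\beta)$) there is a neighborhood $N_{\beta_0}$ of $\beta_0$ on which $h(\cdot,T_{\beta_0}) > c$, and by monotonicity in $t$ this forces $h(\beta,t) > c$ for all $\beta \in N_{\beta_0}$ and all $t \geq T_{\beta_0}$. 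Extracting a finite subcover $N_{\beta_1},\dots,N_{\beta_k}$ and setting $T := \max_i T_{\beta_i}$, I obtain $h(\beta,t) \geq c$ for every $\beta \in S(V)$ and every $t \geq T$.

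It remains to convert this into the form (a). The uniform bound reads $\Phi(x,\exp(v)) \geq c\parallel v\parallel$ whenever $\parallel v\parallel \geq T$, giving $\parallel v\parallel \leq \tfrac{1}{c}\Phi(x,\exp(v))$ on that range. On the complementary ball $\{v \in V : \parallel v\parallel \leq T\}$, which is compact, the continuous function $v \mapsto \Phi(x,\exp(v))$ is bounded below, so the small-$v$ regime is absorbed into a single additive constant. Combining the two ranges yields $\parallel v\parallel \leq C_1 \Phi(x,\exp(v)) + C_2$ with $C_1 = 1/c$ and a suitable $C_2 > 0$, which is precisely (a) and completes the equivalence.
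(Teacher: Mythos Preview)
The paper does not give its own proof of this lemma; it simply records the statement and refers the reader to \cite[Lemma~3.5]{Stability} and \cite{Teleman}. Your argument is correct and is the standard one for results of this shape. The implication (a)\,$\Rightarrow$\,(b) is indeed immediate by substituting $v=t\beta$ and passing to the limit via Lemma~\ref{ConvexKempfness}. For (b)\,$\Rightarrow$\,(a) your three ingredients are exactly what is needed: convexity of $t\mapsto\Phi(x,\exp(t\beta))$ together with $\Phi(x,e)=0$ (from the cocycle condition with $g=h=e$) gives monotonicity of the secant slopes $h(\beta,t)$; lower semicontinuity of $\lambda(x,\cdot)=\sup_t\langle\mup(\exp(t\beta)x),\beta\rangle$ on the compact sphere $S(V)$ yields a strictly positive infimum $m_0$; and the finite-subcover argument (a Dini-type step that avoids assuming continuity of the limit) produces a uniform threshold $T$ beyond which $h(\beta,t)\geq c$. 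The final conversion, handling $\|v\|\leq T$ by boundedness of $\Phi(x,\exp(\cdot))$ on a compact ball, is also fine.
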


The following theorem gives a numerical criterion for stable points in terms of maximal weights. The proof is the same as the proof of \cite[Theorem 3.7]{Stability}.
\begin{theorem}\label{stable}
Let $x\in X$. Then $x$ is stable if and only if $\lambda_x > 0$ on $\partial_\infty M.$
\end{theorem}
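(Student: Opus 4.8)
The plan is to run the argument through Lemma~\ref{le2}, which already identifies positivity of the maximal weights on $S(\liep)$ with linear properness of the Kempf--Ness function, and to exploit the $G$-invariance~(\ref{Ginvariant}) together with Proposition~\ref{polystable-I} and Proposition~\ref{propp}. First I would record the dictionary between the two formulations of the hypothesis. By the bijection $e\colon S(\liep)\to\partial_\infty M$ and formula~(\ref{formula}) (or Lemma~\ref{le3}), the condition $\lambda_x>0$ on $\partial_\infty M$ is equivalent to $\lambda(x,\beta)>0$ for every $\beta\in S(\liep)$; applying Lemma~\ref{le2} with $V=\liep$ this is in turn equivalent to linear properness of $\Phi(x,\cdot)$ on $\liep$. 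I would also note that both hypothesis and conclusion are $G$-invariant: stability is preserved along orbits by definition, and by~(\ref{Ginvariant}) one has $\lambda_{gx}(q)=\lambda_x(g^{-1}\cdot q)$, so $\lambda_x>0$ everywhere if and only if $\lambda_{gx}>0$ everywhere. This lets me normalise $x$ so that $\mup(x)=0$ whenever the orbit meets $\mup^{-1}(0)$.

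For the implication $\lambda_x>0\Rightarrow x$ stable, I would argue as follows. Linear properness of $\Phi(x,\cdot)$ on $\liep$ means $\Phi_x$ is proper on $M=\exp(\liep)\cdot o$, since $d(o,\exp(v)K)=\|v\|$; being proper, continuous and convex on the Hadamard manifold $M$, the function $\Phi_x$ attains a minimum at some point $g^{-1}K$, which is therefore a critical point of $\Phi_x$. By Proposition~\ref{polystable-I} this yields $\mup(gx)=0$, so $G\cdot x\cap\mup^{-1}(0)\neq\emptyset$ and $x$ is polystable. To upgrade this to stability I must show $\lieg_x$ is conjugate to a subalgebra of $\liek$. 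Replacing $x$ by $gx$ (allowed by $G$-invariance) I may assume $\mup(x)=0$ with $\lambda_x>0$ still holding. Polystability gives, via Proposition~\ref{propp}, that $G_x$ is compatible, so $\lieg_x=(\lieg_x\cap\liek)\oplus(\lieg_x\cap\liep)$. If $\beta\in\lieg_x\cap\liep$ were nonzero then $\exp(t\beta)x=x$ for all $t$, hence $\lambda(x,\beta)=\lim_{t\to\infty}\langle\mup(\exp(t\beta)x),\beta\rangle=\langle\mup(x),\beta\rangle=0$, contradicting $\lambda_x>0$. Thus $\lieg_x\cap\liep=0$ and $\lieg_x\subseteq\liek$; undoing the conjugation shows the original $\lieg_x$ is conjugate into $\liek$.

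For the converse, suppose $x$ is stable. Then $x$ is polystable, so after replacing $x$ by a suitable $gx$ I may assume $\mup(x)=0$, and by Proposition~\ref{propp} the stabilizer $G_x$ is compact. Fix $\beta\in S(\liep)$. Since $\mup(x)=0$, formula~(\ref{Equu}) reads $\lambda(x,\beta,t)=\int_0^t\|\beta_X(\exp(s\beta)x)\|^2\,ds$, a nondecreasing nonnegative function of $t$, so $\lambda(x,\beta)\geq 0$. If $\lambda(x,\beta)=0$ the integrand vanishes identically on $[0,\infty)$; in particular $\beta_X(x)=0$, i.e. $\beta\in\lieg_x\cap\liep$, whence $\exp(\R\beta)\subseteq G_x$. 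But $\exp$ restricted to $\liep$ is a proper embedding, so $\exp(\R\beta)$ is a noncompact subgroup, contradicting compactness of $G_x$. Hence $\lambda(x,\beta)>0$ for every $\beta\in S(\liep)$, i.e. $\lambda_x>0$ on $\partial_\infty M$.

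The routine inputs (monotonicity and convexity of $t\mapsto\lambda(x,\beta,t)$, the identity $\beta_X(x)=0\Leftrightarrow\beta\in\lieg_x$, and the Cartan description $M=\exp(\liep)\cdot o$) are standard, and Lemma~\ref{le2} carries the analytic weight of the equivalence between positivity of the weights and properness. The step I expect to be most delicate is the second half of the stability condition, namely deducing that $\lieg_x$ lies in a conjugate of $\liek$: this is where compatibility of $G_x$ from Proposition~\ref{propp} is essential, and where one must use the vanishing of the maximal weight in a fixed direction $\beta\in\lieg_x\cap\liep$ rather than mere polystability. Guaranteeing that $\Phi_x$ actually attains, and not merely approaches, its infimum is the other point deserving care, and it is exactly what properness secures.
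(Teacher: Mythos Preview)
Your proof is correct and follows the natural route through Lemma~\ref{le2}, Proposition~\ref{polystable-I}, and Proposition~\ref{propp}, which the paper sets up precisely for this purpose. The paper itself does not give a proof but simply refers to \cite[Theorem~3.7]{Stability}; your argument is the expected one and matches that reference in spirit.
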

\begin{definition}\label{deff}
    We say that $p, q \in \partial_\infty M$ are connected if there exists a geodesic $\alpha$ in $X$ such that $p = \alpha(\infty)$ and $q = \alpha(-\infty).$
\end{definition}
For any $x\in X,$ as in \cite{MUNDETT}, see also \cite{bgs}, let $Z(x) := \{p\in \partial_\infty M : \lambda_x(p) = 0\}.$

\begin{lemma}\label{compatible-Stabilizer}
  Let $x\in X$ be such that $\mu_\liep(x) = 0,$ then $\lieg_x = \liek_x \oplus \liep_x$ and $Z(x) = e(S(\liep_x)) = \partial_\infty G_x/K_x.$
\end{lemma}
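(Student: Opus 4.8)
The plan is to derive both assertions from the single hypothesis $\mup(x)=0$, which already exhibits $x$ as a point of $G\cdot x\cap\mu_\liep^{-1}(0)$ and hence as a polystable point. First I would invoke Proposition \ref{propp}: since $x$ is polystable, $G_x$ is compatible, which by definition means $G_x=K_x\exp(\liep_x)$ with $K_x=G_x\cap K$ and $\liep_x=\lieg_x\cap\liep$. Passing to Lie algebras gives exactly $\lieg_x=\liek_x\oplus\liep_x$, settling the first claim and recording that $G_x$ is a real reductive subgroup of $G$ with maximal compact $K_x$.

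Next I would compute $Z(x)$ directly. Fix $\beta\in S(\liep)$. Using the middle expression of (\ref{formula}) together with (\ref{form1}) one gets
\[
\lambda_x(e(\beta))=\lim_{t\to\infty}\frac{d}{dt}\Phi(x,\exp(-t\beta))=\lambda(x,-\beta).
\]
Applying (\ref{Equu}) in the direction $-\beta$, and using $\mup(x)=0$ together with $\parallel(-\beta)_X\parallel=\parallel\beta_X\parallel$, this becomes the manifestly nonnegative expression
\[
\lambda_x(e(\beta))=\langle\mup(x),-\beta\rangle+\int_0^\infty\parallel\beta_X(\exp(-s\beta)x)\parallel^2\,ds=\int_0^\infty\parallel\beta_X(\exp(-s\beta)x)\parallel^2\,ds.
\]
Thus $\lambda_x(e(\beta))\geq 0$, and it vanishes precisely when $\beta_X(\exp(-s\beta)x)=0$ for all $s\geq 0$; evaluating at $s=0$ gives $\beta_X(x)=0$, so $x$ is an equilibrium of the flow of $\beta$ and hence $\exp(\R\beta)\subset G_x$, i.e. $\beta\in\lieg_x\cap\liep=\liep_x$. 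Conversely, if $\beta\in\liep_x$ then $\exp(-s\beta)x=x$ for every $s$, the integrand vanishes identically, and $\lambda_x(e(\beta))=0$. Therefore, for $\beta\in S(\liep)$, one has $\lambda_x(e(\beta))=0$ if and only if $\beta\in S(\liep_x)$. Since $e\colon S(\liep)\to\partial_\infty M$ is a bijection and $Z(x)=\{e(\beta):\lambda_x(e(\beta))=0\}$, this gives $Z(x)=e(S(\liep_x))$.

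Finally I would identify $e(S(\liep_x))$ with $\partial_\infty G_x/K_x$. By the first step $G_x=K_x\exp(\liep_x)$ is compatible, so the symmetric space $G_x/K_x$ sits inside $M=G/K$ as a totally geodesic submanifold (the inclusion $H/L\hookrightarrow M$ recalled in the discussion of symmetric spaces, with $H=G_x$, $L=K_x$, $\lieq=\liep_x$), and its boundary at infinity is represented exactly by the geodesic rays $\gamma^\beta$ with $\beta\in S(\liep_x)$. Under the bijection $e$ this boundary is precisely $e(S(\liep_x))$, whence $Z(x)=e(S(\liep_x))=\partial_\infty G_x/K_x$.

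The only genuinely delicate points are the sign bookkeeping that converts $\lambda_x(e(\beta))$ into the nonnegative integral above, and the passage from the pointwise vanishing $\beta_X(x)=0$ to $\exp(\R\beta)\subset G_x$ (via uniqueness of integral curves). Once the integral representation is in place, both the characterization of $Z(x)$ and its identification with the boundary of the totally geodesic subspace $G_x/K_x$ are routine, so I expect no serious obstacle beyond these bookkeeping steps.
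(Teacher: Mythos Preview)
Your proof is correct and follows essentially the same approach as the paper. The only cosmetic difference is in the key step $\lambda_x(e(\beta))=0\Rightarrow\beta\in\liep_x$: the paper phrases this via convexity of $f(t)=\Phi_x(\exp(t\beta)K)$ together with $f'(0)=f'(\infty)=0$ and then invokes property~(c) of the Kempf--Ness function, while you unwind the same content through the integral formula~(\ref{Equu}) and ODE uniqueness; these are two ways of saying the same thing.
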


\begin{proof}
    By Proposition \ref{propp} if $\mu_\liep(x) = 0,$ $G_x$ is a compatible subgroup of $G.$ Hence, $\lieg_x = \liek_x \oplus \liep_x.$
To prove the second assertion, let $\beta \in S(\liep).$ Suppose $e(\beta) \in Z(x).$ This means that $\lambda_x(e(\beta)) = 0,$ then the convex function $f(t) := \Phi_x(\exp(t\beta)K)$ satisfies
$$
f'(\infty) = \lim_{t\to \infty}\frac{d}{dt}\Phi_x(\exp(t\beta)K) = \lambda_x(e(\beta)) = 0
$$
and
$$
f'(0) = \frac{d}{dt}|_{t= 0}\Phi_x(\exp(t\beta)K) = \langle \mu_\liep(x), -\beta \rangle = 0.
$$
These imply that $f$ is constant for all $t> 0,$ and by the condition (c) of Kempf-Ness function, $\exp(\R\beta)\subset G_x.$ Since $G_x$ is compatible, $\beta \in S(\liep_x).$ Conversely, if $\beta \in S(\liep_x),$ then $f$ is linear. Moreover, $f'(0) = 0.$ Therefore, $f \equiv 0$ and $e(\beta) \in Z(x).$
\end{proof}
Let $x\in X$ and $\beta \in S(\liep).$ Since $\liep \subset i\liu,$ then $i\beta \in \liu.$ We define the torus $T_\beta$ given as
$$
T_\beta := \overline{\{\exp(ti\beta) : t\in \R\}} \subseteq U^o,
$$
where $U^o$ denotes the connected component of $U$ containing the identity.
\begin{lemma}\label{torus}
    Let $g\in G.$ Then $\dim T_\beta = \dim T_{g\cdot \beta}.$
\end{lemma}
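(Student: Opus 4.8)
The plan is to reduce the statement to the case where $g\cdot\beta$ and $\beta$ are conjugate under $\Ad(K)$, and then to observe that the torus $T_\beta$ depends on $\beta$ only up to such conjugation. The key reduction is this: if I can show that for every $g\in G$ the element $g\cdot\beta\in S(\liep)$ lies in the adjoint orbit $\Ad(K)\beta$, say $g\cdot\beta=\Ad(k)\beta$ with $k\in K$, then I am done. Indeed, since $k\in K\subseteq U^\C$ acts complex-linearly on $\liu^\C$, one has $\exp\big(t\,\textbf{i}(\Ad(k)\beta)\big)=\exp\big(\Ad(k)(t\,\textbf{i}\beta)\big)=k\exp(t\,\textbf{i}\beta)k^{-1}$, so taking closures gives $T_{g\cdot\beta}=T_{\Ad(k)\beta}=kT_\beta k^{-1}$; conjugation by $k$ is an isomorphism of Lie groups, whence $\dim T_{g\cdot\beta}=\dim T_\beta$.

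To produce such a $k$, I would invoke Proposition \ref{parabolic-decomposition}, which gives $G=KG^{\beta+}$. Writing $g=kh$ with $k\in K$ and $h\in G^{\beta+}$, and recalling that the $K$-action on $\partial_\infty M$ induces the adjoint action on $S(\liep)$ (so that $k\cdot e(\beta)=e(\Ad(k)\beta)$), it suffices to prove that $h$ fixes the boundary point $e(\beta)$; for then $g\cdot e(\beta)=k\cdot\big(h\cdot e(\beta)\big)=k\cdot e(\beta)=e(\Ad(k)\beta)$, that is, $g\cdot\beta=\Ad(k)\beta$.

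The heart of the argument — and the step I expect to be the main obstacle — is showing that the parabolic subgroup $G^{\beta+}$ stabilizes $e(\beta)=[\gamma^\beta]$. For $h\in G^{\beta+}$ the defining limit $\lim_{t\to-\infty}\exp(t\beta)\,h\,\exp(-t\beta)$ exists; call it $L:=\pi^{\beta+}(h)\in G^\beta$. Using that each $\exp(-t\beta)$ is an isometry of $M$, I would compute
\[
d\big(h\,\gamma^\beta(t),\ \gamma^\beta(t)\big)=d\big(\exp(-t\beta)\,h\,\exp(t\beta)\cdot o,\ o\big),
\]
and then note that, after the substitution $s=-t$, the element $\exp(-t\beta)\,h\,\exp(t\beta)=\exp(s\beta)\,h\,\exp(-s\beta)$ converges to $L$ as $t\to+\infty$, precisely because $h\in G^{\beta+}$. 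Hence the right-hand side converges to $d(L\cdot o,o)<\infty$, so $\sup_{t>0}d\big(h\,\gamma^\beta(t),\gamma^\beta(t)\big)<\infty$; since $h$ is an isometry, $h\,\gamma^\beta$ is a unit-speed geodesic asymptotic to $\gamma^\beta$, i.e. $h\,\gamma^\beta\sim\gamma^\beta$ and therefore $h\cdot e(\beta)=e(\beta)$.

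Combining the three steps yields $g\cdot\beta=\Ad(k)\beta$ and hence $\dim T_\beta=\dim T_{g\cdot\beta}$. The one point requiring care is keeping the sign conventions straight — checking that it is $G^{\beta+}$, rather than $G^{\beta-}$, that fixes the forward endpoint $e(\beta)$, consistently with the decomposition $G=KG^{\beta+}$ of Proposition \ref{parabolic-decomposition}. A quick check on $\mathrm{SL}(2,\R)$, where for $\beta$ diagonal the group $G^{\beta+}$ is the upper-triangular Borel and fixes exactly the attracting boundary point of $\gamma^\beta$, confirms the orientation and gives confidence that the general convergence computation above is set up with the correct sign.
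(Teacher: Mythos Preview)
Your proposal is correct and follows essentially the same route as the paper: decompose $g=kh$ via Proposition~\ref{parabolic-decomposition}, use that $G^{\beta+}$ fixes $e(\beta)$ to get $g\cdot\beta=\Ad(k)\beta$, and conclude $T_{g\cdot\beta}=kT_\beta k^{-1}$. The only difference is that the paper simply cites \cite[Proposition~2.17.3]{Eberlein1997GeometryON} for the stabilization of $e(\beta)$ by $G^{\beta+}$, whereas you supply the short direct argument via $d\big(h\gamma^\beta(t),\gamma^\beta(t)\big)=d\big(\exp(-t\beta)h\exp(t\beta)\cdot o,\,o\big)\to d(L\cdot o,o)$; your sign check is consistent with the paper's conventions.
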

\begin{proof}
 It is well-known that $G^{\beta+}$ fixes $e(\beta)$, see for instance \cite[Proposition 2.17.3, p.102]{Eberlein1997GeometryON}. Then for any $g\in G$, write $g = kh,$ where $k\in K$ and $h\in G^{\beta+},$ so that $g\cdot \beta=kh\cdot \beta=k\cdot \beta=\mathrm{Ad}(k)(\beta)$. Hence
    \begin{align*}
        T_{g\cdot \beta} &= \overline{\{\exp(it\mathrm{Ad}(k)\beta) : t\in \R\}}\\
        &= \overline{\{k\exp(it\beta)k^{-1} : t\in \R\}}\\
        &= kT_\beta k^{-1}.
    \end{align*} Hence, $\dim T_\beta = \dim T_{g\cdot \beta}.$
\end{proof}

\begin{lemma}\label{limit}
    Let $x\in X$ and $p, p'\in Z(x)$ be connected. Then there exists $g\in G$ and $\xi\in S(p)$ such that $\xi\in \liep_y$, where $y = gx.$
\end{lemma}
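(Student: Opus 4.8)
The plan is to realize the connection between $p$ and $p'$ by an explicit geodesic in $M$ and then transport the problem, via the $G$-equivariance of the maximal weight function, to a base point where both asymptotic slopes vanish at once. Since $p,p'\in Z(x)$ are connected, I would take a complete geodesic $\alpha$ in $M$ with $\alpha(\infty)=p$ and $\alpha(-\infty)=p'$ and write it in the normal form $\alpha(t)=g^{-1}\exp(t\beta)K$ for some $g\in G$ and $\beta\in S(\liep)$, so that $p=g^{-1}\cdot e(\beta)$ and $p'=g^{-1}\cdot e(-\beta)$. Setting $y:=gx$ and invoking the invariance $\lambda_{gx}(q)=\lambda_x(g^{-1}\cdot q)$ from \eqref{Ginvariant}, the hypotheses $\lambda_x(p)=\lambda_x(p')=0$ translate into
\[
\lambda_y(e(\beta))=0,\qquad \lambda_y(e(-\beta))=0.
\]
I would then take $\xi:=\beta$; since $e(\beta)=g\cdot p$, this $\xi$ is the unit vector of $S(\liep)$ representing $p$ after translation by $g$, that is $\xi\in S(p)$, and it remains only to prove $\beta\in\liep_y$.

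Next I would exploit convexity. By Lemma \ref{ConvexKempfness} the function $f(t):=\Phi_y(\exp(t\beta)K)$ is convex, and by the identification of the maximal weight with the asymptotic slope one has $f'(+\infty)=\lambda_y(e(\beta))=0$. Running the geodesic backwards, $t\mapsto\Phi_y(\exp(-t\beta)K)=f(-t)$ computes $\lambda_y(e(-\beta))$, whence $f'(-\infty)=-\lambda_y(e(-\beta))=0$. Because $f$ is convex, $f'$ is nondecreasing, and a nondecreasing function with $f'(-\infty)=f'(+\infty)=0$ must vanish identically; hence $f$ is constant.

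Finally I would convert constancy of $f$ into a stabilizer statement. Since $f(t)=\Phi_y(\exp(t\beta)K)=\Phi(y,\exp(-t\beta))$ is constant, its second derivative vanishes for all $t$, so property (c) of the Kempf-Ness function (applied with $v=-\beta$) yields $\exp(\R\beta)\subset G_y$. Consequently $\beta\in\lieg_y\cap\liep=\liep_y$, which is exactly the desired conclusion with $\xi=\beta$ and $y=gx$.

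I expect the principal obstacle to be the bookkeeping of the sign and base-point conventions, so that a single element $g$ simultaneously produces $y=gx$, the vanishing of both one-sided slopes, and $\xi=\beta\in S(p)$; once these are aligned the convexity argument forcing $f\equiv\mathrm{const}$ and the appeal to property (c) are routine. A secondary point worth checking is that both maximal weights are genuinely finite (they are, being $0$), so that the asymptotic-slope description of Lemma \ref{ConvexKempfness} applies and the one-sided limits $f'(\pm\infty)$ are the relevant finite numbers.
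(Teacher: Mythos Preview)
Your proof is correct and follows essentially the same route as the paper's own argument: write the connecting geodesic as $g^{\pm1}\exp(t\xi)K$, use the $G$-equivariance \eqref{Ginvariant} to transfer the vanishing of $\lambda_x$ at $p,p'$ to the vanishing of $\lambda_y(e(\pm\xi))$, and conclude from convexity of $t\mapsto\Phi_y(\exp(t\xi)K)$ that the function is constant, whence property~(c) gives $\exp(\R\xi)\subset G_y$ and $\xi\in\liep_y$. The only differences from the paper are cosmetic: the paper writes $\alpha(t)=g\exp(t\xi)K$ and sets $y=g^{-1}x$, whereas you write $\alpha(t)=g^{-1}\exp(t\beta)K$ and set $y=gx$; and you spell out the step $f'(-\infty)=f'(+\infty)=0\Rightarrow f'\equiv0$ in slightly more detail. (Note that ``$S(p)$'' in the statement is almost certainly a typo for $S(\liep)$, so your parenthetical interpretation is harmless but unnecessary.)
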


\begin{proof}
    Since $p, p'\in Z(x)$ are connected, then there exists geodesic $\alpha \in M$ such that $\alpha(+\infty) = p\in Z(x)$ and $\alpha(-\infty) = p' \in Z(x).$ Assume $\alpha(t) = g\exp(t\xi)K,$ $g\in G.$ Then $p = g\cdot e(\xi)$ and $p' = g\cdot e(-\xi).$ By (\ref{Ginvariant}),

$$\lambda_{g^{-1}x}(e(\xi)) = \lambda_x(g\cdot e(\xi)) = \lambda_x(p) = 0$$ and
$$
\lambda_{g^{-1}x}(e(-\xi)) = \lambda_x(g\cdot e(-\xi)) = \lambda_x(p') = 0.
$$
Let $y = g^{-1}x.$ This means that the convex function $t \mapsto \Phi_y(\exp(t\xi)K)$ has zero derivatives at both $+\infty$ and $-\infty,$ and so, it is constant and by property (c) of Kempf-Ness function, $\exp(\R\xi) \subset G_y$, $\xi \in \liep_y.$
\end{proof}

Let $X^\beta := \{z\in X : \beta_X(z) = 0\}.$ $G^\beta$ preserves $X^\beta$ \cite[Prop. 2.9]{Stability} and $X^\beta$ is the disjoint union of closed submanifold of $X$ \cite{heinzner-schwarz-stoetzel}. The following result is proved in \cite[Proposition 2.10,p.92]{Stability}
\begin{proposition}\label{restric}
The restriction $(\mup)_{\vert_{X^\beta}}$ takes value on $\liep^\beta$ and so it coincides with the $G^\beta$-gradient map $(\mu_{\liep^\beta})_{\vert_{X^\beta}}$.
\end{proposition}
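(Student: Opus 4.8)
The plan is to establish the two assertions in turn; the substantive one is that $(\mup)_{\vert_{X^\beta}}$ takes values in $\liep^\beta$, after which the identification with the $G^\beta$-gradient map is essentially formal. The only ingredients I expect to need are the defining property $\mathrm{grad}\,\mu_\liep^\gamma=\gamma_X$, the $K$-equivariance of $\mup$, and the $\Ad$-invariance together with the nondegeneracy of $\langle\cdot,\cdot\rangle$ on $\liek$.

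First I would fix $z\in X^\beta$, so that $\beta_X(z)=0$, together with an arbitrary $\eta\in\liek$, and evaluate $\mathrm{d}(\mu_\liep^\beta)_z(\eta_X(z))$ in two different ways. Since $\mathrm{grad}\,\mu_\liep^\beta=\beta_X$, one has
$$
\mathrm{d}(\mu_\liep^\beta)_z(\eta_X(z))=\langle \beta_X(z),\eta_X(z)\rangle=0,
$$
the last equality because $\beta_X(z)=0$. On the other hand, differentiating the $K$-equivariance relation $\mup(\exp(s\eta)z)=\Ad(\exp(s\eta))\mup(z)$ at $s=0$ gives $(\mathrm{d}\mup)_z(\eta_X(z))=[\eta,\mup(z)]$; pairing with $\beta$ and using the $\Ad$-invariance of $\langle\cdot,\cdot\rangle$,
$$
\mathrm{d}(\mu_\liep^\beta)_z(\eta_X(z))=\langle[\eta,\mup(z)],\beta\rangle=\langle\mup(z),[\beta,\eta]\rangle.
$$
Comparing the two computations yields $\langle\mup(z),[\beta,\eta]\rangle=0$ for every $\eta\in\liek$.

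Next I would rewrite $\langle\mup(z),[\beta,\eta]\rangle=\langle[\mup(z),\beta],\eta\rangle$, again by $\Ad$-invariance, so that $\langle[\mup(z),\beta],\eta\rangle=0$ for all $\eta\in\liek$. Since $\mup(z),\beta\in\liep$ and $[\liep,\liep]\subseteq\liek$, the element $[\mup(z),\beta]$ lies in $\liek$, on which $\langle\cdot,\cdot\rangle$ is definite and hence nondegenerate; therefore $[\mup(z),\beta]=0$, that is, $\mup(z)\in\liep^\beta$. This is the first assertion.

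For the second assertion I would use that, by Lemma \ref{lemcomp}(\ref{lemcomp3}), $G^\beta=K^\beta\exp(\liep^\beta)$ is a compatible subgroup, and that $X^\beta$ is $G^\beta$-stable, as recalled above. For any $\zeta\in\liep^\beta\subseteq\liep$ the field $\zeta_X$ is then tangent to $X^\beta$ and coincides with $\mathrm{grad}\,\mu_\liep^\zeta$; restricting to $X^\beta$ and passing to the induced metric, the tangential gradient of $\mu_\liep^\zeta\vert_{X^\beta}$ is again $\zeta_{X^\beta}$. Thus the map $(\mup)_{\vert_{X^\beta}}$, which by the first part is valued in $\liep^\beta$, satisfies precisely the defining property of the $G^\beta$-gradient map on $X^\beta$, and since that map is the one induced by restricting the ambient momentum map to the compatible subgroup $G^\beta$ (cf.\ the restriction property of the Kempf-Ness function in \S\ref{Kempf-Ness Function}), the two coincide. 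The crux of the whole argument, and its only real obstacle, is the two-way evaluation of $\mathrm{d}(\mu_\liep^\beta)_z(\eta_X(z))$ in the second paragraph, which is what converts the vanishing of $\beta_X(z)$ into the bracket relation $[\mup(z),\beta]=0$.
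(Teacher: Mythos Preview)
Your argument is correct. The paper does not supply its own proof of this proposition; it simply cites \cite[Proposition 2.10, p.~92]{Stability}. Your self-contained proof via the two computations of $\mathrm{d}(\mu_\liep^\beta)_z(\eta_X(z))$---one from $\mathrm{grad}\,\mu_\liep^\beta=\beta_X$ and $\beta_X(z)=0$, the other from $K$-equivariance---correctly yields $\langle[\mup(z),\beta],\eta\rangle=0$ for all $\eta\in\liek$, and the negative-definiteness of $\langle\cdot,\cdot\rangle$ on $\liek$ then forces $[\mup(z),\beta]=0$.

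For the second assertion your reasoning is fine, though it can be said more directly: by construction $\mu_{\liep^\beta}$ is the composition of $\mup$ with orthogonal projection $\liep\to\liep^\beta$ (since $\langle\mu_{\liep^\beta}(z),\zeta\rangle=\mu^{-i\zeta}(z)=\langle\mup(z),\zeta\rangle$ for $\zeta\in\liep^\beta$), so once $(\mup)_{\vert X^\beta}$ is known to land in $\liep^\beta$ the two maps agree on $X^\beta$ without needing the gradient characterization or connectedness considerations.
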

\begin{corollary}\label{cor1}
If $x\in X^\beta$ is $G^\beta$-polystable, then $x$ is $G$-polystable.
\end{corollary}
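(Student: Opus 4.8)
The plan is to derive the corollary directly from Proposition \ref{restric} together with the invariance of $X^\beta$ under $G^\beta$. First I would unpack the hypothesis: that $x\in X^\beta$ is $G^\beta$-polystable means the $G^\beta$-orbit of $x$ inside $X^\beta$ meets the zero set of the $G^\beta$-gradient map, so there exists $g\in G^\beta$ with $\mu_{\liep^\beta}(gx)=0$.

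Since $G^\beta$ preserves $X^\beta$ and $x\in X^\beta$, the point $y:=gx$ again lies in $X^\beta$. This is exactly the condition needed to apply Proposition \ref{restric} at $y$: on $X^\beta$ the ambient gradient map $\mup$ takes values in $\liep^\beta$ and coincides there with $\mu_{\liep^\beta}$. Evaluating at $y$ yields $\mup(y)=\mu_{\liep^\beta}(y)=0$, and since $y=gx\in G\cd x$ with $g\in G^\beta\subseteq G$, we obtain $G\cd x\cap\mup^{-1}(0)\neq\emptyset$, i.e.\ $x$ is $G$-polystable.

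I do not expect a genuine obstacle here, as the whole content sits in Proposition \ref{restric}. The only subtlety worth flagging is notational: a priori the $G^\beta$-gradient map is the orthogonal projection of $\mup$ onto $\liep^\beta$, so $\mu_{\liep^\beta}(y)=0$ by itself forces only the $\liep^\beta$-component of $\mup(y)$ to vanish; it is precisely Proposition \ref{restric}, which guarantees that on $X^\beta$ the map $\mup$ has no component transverse to $\liep^\beta$, that promotes this to the full vanishing $\mup(y)=0$ needed for $G$-polystability.
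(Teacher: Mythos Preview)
Your proof is correct and follows exactly the approach intended in the paper: the corollary is stated immediately after Proposition~\ref{restric} with no separate argument because, as you show, it is a direct consequence of the equality $(\mup)_{\vert X^\beta}=\mu_{\liep^\beta}$ together with $G^\beta$-invariance of $X^\beta$ and the inclusion $G^\beta\subseteq G$.
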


\begin{theorem}\label{main result}
A point $x\in X$ is polystable if and only if $\lambda_x \geq 0$ and for any $p\in Z(x)$ there exists $p'\in Z(x)$ such that $p$ and $p'$ are connected.
\end{theorem}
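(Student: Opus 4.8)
\textbf{Reformulation and the forward implication.}
The plan is to work analytically: by Proposition \ref{polystable-I} the point $x$ is polystable exactly when the convex function $\Phi_x\colon M\to\R$ has a critical point, and since $M$ is a Hadamard manifold a smooth convex function has a critical point precisely when it attains its minimum. Both conditions in the statement are $G$-invariant, by \eqref{Ginvariant}: indeed $\lambda_{g^{-1}x}(p)=\lambda_x(g\cdot p)$ gives $Z(g^{-1}x)=g^{-1}\cdot Z(x)$, and connectedness is preserved because $G$ acts on $M$ by isometries. So for the forward direction I would first use polystability to replace $x$ by an orbit representative with $\mup(x)=0$. Then $o=eK$ is a critical point of $\Phi_x$, hence a global minimum, so every geodesic ray from $o$ has nonnegative asymptotic slope, i.e. $\lambda_x\ge 0$. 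For connectedness I would invoke Lemma \ref{compatible-Stabilizer}, which gives $Z(x)=e(S(\liep_x))$: given $p=e(\beta)$ with $\beta\in S(\liep_x)$, the antipode $p'=e(-\beta)$ again lies in $e(S(\liep_x))=Z(x)$, and the geodesic $\gamma^\beta(t)=\exp(t\beta)K$ connects $p$ and $p'$.

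\textbf{The converse: overall scheme.}
Assuming $\lambda_x\ge 0$ and the connectedness property, if $Z(x)=\emptyset$ then $\lambda_x>0$ on $\partial_\infty M$, so $x$ is stable by Theorem \ref{stable} and we are done. Otherwise I would argue by induction on $\dim G$, reducing to a proper compatible subgroup. Picking $p\in Z(x)$ and a connected partner $p'\in Z(x)$ furnished by the hypothesis, Lemma \ref{limit} produces $g\in G$, a unit vector $\xi\in\liep$, and the point $y=g^{-1}x$ with $\xi\in\liep_y$; equivalently $\exp(\R\xi)\subset G_y$ and $\Phi_y$ is constant along $\gamma^\xi$. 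Since $y\in X^\xi$ and, by Proposition \ref{restric}, $\mup$ restricted to $X^\xi$ is the $G^\xi$-gradient map $\mu_{\liep^\xi}$, it then suffices by Corollary \ref{cor1} to show that $y$ is $G^\xi$-polystable.

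\textbf{Running the induction.}
The subgroup $G^\xi$ is compatible by Lemma \ref{lemcomp}, and $M^\xi=G^\xi/K^\xi\hookrightarrow M$ is totally geodesic with $\partial_\infty M^\xi\hookrightarrow\partial_\infty M$. Because the Kempf--Ness function of the $G^\xi$-action on $X^\xi$ is the restriction of $\Phi$, the maximal weight of $y$ for $G^\xi$ is $\lambda_y$ restricted to $\partial_\infty M^\xi$; in particular it is $\ge 0$ and $Z^{G^\xi}(y)=Z(y)\cap\partial_\infty M^\xi$. When $\xi$ is not central one has $G^\xi\subsetneq G$, so $\dim G^\xi<\dim G$ and the inductive hypothesis applies to $(G^\xi,X^\xi,y)$ once its hypotheses are checked, yielding $G^\xi$-polystability of $y$ and hence, through Corollary \ref{cor1}, polystability of $x$.

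\textbf{The main obstacle.}
The hard part will be verifying that the connectedness hypothesis descends to $(G^\xi,X^\xi,y)$: given $q\in Z^{G^\xi}(y)$ I must produce a partner $q'\in Z^{G^\xi}(y)$ joined to $q$ by a geodesic lying inside $M^\xi$, whereas the hypothesis only supplies a partner and a connecting geodesic in the ambient $M$. The naive choice of $q'$ as the antipode of $q=e(\eta)$ fails, since convexity of $\Phi_y$ along $\gamma^\eta$ gives only $\lambda_y(e(-\eta))\ge 0$, not equality. I expect the resolution to require pushing the $G$-connecting geodesic into $M^\xi$, exploiting the $L_{\exp(t\xi)}$-invariance of $\Phi_y$ (a consequence of the cocycle condition \eqref{cocycle} together with $\exp(\R\xi)\subset G_y$) and the fact, from Lemma \ref{torus}, that the relevant tori keep constant dimension along $G$-orbits. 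A second, genuinely separate difficulty is the case of \emph{central} $\xi$, where $G^\xi=G$ and $\dim G$ does not drop: here I would use the de Rham splitting $M\cong (G_{ss}/K_{ss})\times\R^{m}$ with the Euclidean factor corresponding to $\liez(\lieg)\cap\liep$. On the Euclidean factor the connectedness hypothesis forces the zero-slope directions to form the lineality subspace of the convex function $\Phi_y$, on which $\Phi_y$ is constant, while on its orthogonal complement $\lambda_y>0$ yields coercivity via Lemma \ref{le2}; this confines the minimization to the semisimple factor, where every nonzero direction is noncentral and the dimension induction proceeds. Finally one must check the routine point that Assumption \ref{assumption1} is inherited by the connected component of $X^\xi$ containing $y$, the bound for $\liep^\xi\subset\liep$ being a restriction of \eqref{assumption} together with $d_X\le d_{X^\xi}$.
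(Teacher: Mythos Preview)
Your forward implication is correct and essentially identical to the paper's: pass to $y=gx$ with $\mup(y)=0$, read $\lambda_y\ge 0$ off the convexity of $\Phi_y$, and use Lemma~\ref{compatible-Stabilizer} to identify $Z(y)=e(S(\liep_y))$ so that antipodes supply connected partners, then transport back by \eqref{Ginvariant}.

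The converse, however, has a genuine gap. Your scheme is induction on $\dim G$, reducing to $G^\xi$ via Lemma~\ref{limit} and Corollary~\ref{cor1}; but to invoke the inductive hypothesis you must verify that the connectedness property descends to $(G^\xi,X^\xi,y)$, and you yourself flag this as ``the hard part'' without resolving it. There is no evident mechanism for pushing an ambient connecting geodesic in $M$ into the totally geodesic slice $M^\xi$: the $L_{\exp(t\xi)}$-invariance of $\Phi_y$ only tells you $\Phi_y$ is constant in the $\xi$-direction, and Lemma~\ref{torus} only says the torus dimension is constant along a $G$-orbit in $\partial_\infty M$; neither produces a partner $q'\in Z(y)\cap\partial_\infty M^\xi$ connected to $q$ \emph{inside} $M^\xi$. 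Your separate treatment of central $\xi$ via a de~Rham splitting is a plausible sketch but also not the route the paper takes.

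The paper bypasses induction entirely with one extra idea: choose the initial $p=e(\beta)\in Z(x)$ so that $\dim T_\beta$ is \emph{maximal} over $e^{-1}(Z(x))$. After passing to $y$ and $\xi$ (your Lemma~\ref{limit} step), Lemma~\ref{torus} preserves this maximality. Set $\lia=i\liet_\xi\cap\liep^\xi\subset\lieg_y$ and split $\liep^{\lia}=\lia\oplus\liep'$. If some $\beta'\in S(\liep')$ had $\lambda_y(e(\beta'))=0$, then Lemmas~\ref{le1} and~\ref{le3} give $\frac{\xi+a\beta'}{\|\xi+a\beta'\|}\in e^{-1}(Z(y))$ for every $a>0$, and a lattice argument shows that for suitable $a$ one has $\dim T_{\xi+a\beta'}>\dim T_\xi$, contradicting maximality. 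Hence $\lambda_y>0$ on $S(\liep')$, so by Lemma~\ref{le2} $\Phi(y,\cdot)$ is linearly proper on $\liep'$ and attains its infimum there; since $\lia\subset\lieg_y$ forces $\Phi(y,\exp(v))=0$ for $v\in\lia$, this infimum is already the infimum over all of $\liep^{\lia}$. Thus $\Phi_y$ has a critical point on $(G^{\lia})^o/(K^{\lia})^o$, and Proposition~\ref{polystable-I} together with Corollary~\ref{cor1} finishes. The maximal-torus choice is precisely what lets the paper avoid ever having to descend the connectedness hypothesis.
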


\begin{proof}
    Let $x\in X.$ If $Z(x) = \emptyset$, $\lambda_x > 0$ and by Theorem \ref{stable}, $x$ is stable and hence polystable. Suppose $Z(x) \neq \emptyset.$ Let $p\in Z(x).$ Let $\beta \in S(\liep)$ such that $p = e(\beta).$ suppose $p\in Z(x)$ is chosen such that the of the torus $T_\beta$ satisfies
$$
\dim T_\beta = \max_{\eta \in e^{-1}(Z(x))} \dim T_\eta.
$$

By assumption there is a geodesic $\alpha \in M$ such that $\alpha(+\infty) = p\in Z(x)$ and $\alpha(-\infty) = p' \in Z(x).$ Assume $\alpha(t) = g\exp(t\xi)K,$ $g\in G.$ Then $p = g\cdot e(\xi)$ and $p' = g\cdot e(-\xi).$ By Lemma \ref{limit}, $\xi \in \liep_y$ where $y = g^{-1}x.$ Moreover, since $e(\beta) = p = g\cdot e(\xi),$ using Lemma \ref{torus},
$$\dim T_\xi = \dim T_\beta = \max_{\eta \in e^{-1}(Z(x))} \dim T_\eta.
$$
Let $\mathfrak t_\xi$ be the Lie algebra of $T_\xi$. Then $\mathfrak a=i \mathfrak t_\xi \cap \liep^\xi$ is an Abelian subalgebra of $\liep^\xi$ different from zero since $\beta \in \lia$. Since $T_\xi =\overline{\exp(i\R \xi)}$ fixes $y$ it follows that $\lia \subseteq \lieg_y$

Let $Y$ be the connected component of $X^{\lia}$ containing $y$. By Lemma \ref{lemcomp}, $(G^\lia)^o=(K^\lia)^o \exp(\liep^\lia)$ is compatible and it preserves $Y$. By Proposition \ref{restric}  we get $(\mup)|_{Y}=\mu_{\liep^\lia}$. Hence, if $y$ is $(G^\lia)^o$-polystable, then it is $G$-polystable. We split $\mathfrak{p}^\lia = \text{span}(\lia) \oplus \mathfrak{p}^{'}$, where $\mathfrak{p}^{'}$ is the orthogonal of $\lia$ and so it is a $K^\lia$-invariant splitting. . 

Claim: $\lambda_y(e(\beta')) > 0$ for all $\beta' \in S(\liep').$ Indeed, we prove this claim by contradiction. Suppose there exists $\beta' \in S(\liep')$ such that $\lambda_y(e(\beta')) = 0.$ Hence $[\xi, \beta'] = 0$ by the choice of $\xi$ and $\beta',$  and they are linearly independent.
Let $a > 0.$ Since $[\xi,\beta']=0$ and $\xi \in \lieg_y$, by Lemma \ref{le1} it follows
$$
\lim_{t\to+\infty} \Phi(y,\exp(t(\xi+a\beta'))=\lim_{t\to+\infty} \Phi(y,\exp(t(\xi))+a \lim_{t\to+\infty} \Phi(y,\exp(t\beta')).
$$
Since $\lambda_y(e(\xi)) = \lambda_y((\beta')) = 0,$ by Lemma \ref{le3}, it follows that
$$
\lim_{t\to+\infty} \Phi(y,\exp(t(\xi+a\beta'))=0.
$$
Applying, again, Lemma \ref{le3}, we have
$$
\lambda_y(e(\frac{\xi + a\beta'}{\parallel \xi + a\beta'\parallel}))= 0,
$$
and so the vector $$\frac{\xi + a\beta'}{\parallel \xi + a\beta'\parallel}$$ belongs to $e^{-1}(Z(y)).$

We claim that for some $a>0$, $\dim T_{\xi+a\beta'}>\dim T_\xi$.

Let $T'= \overline{\exp(\R i\xi + \R i\beta')}\subseteq (U^\xi)^o$ and $T_{\beta'}=\overline{\exp(\R i \beta')}$.
 Let $U'\subseteq ( U^\xi)^o$ be a compact connected subgroup such that  the morphism 
\[
T_\xi \times U' \to (U^\xi)^o, (a,b) \mapsto (ab),
\]
is surjective with finite center. Since $\beta' \notin \lia$, it follows that $i\beta \notin \mathfrak t_\xi$. Hence, $T_{\beta'}\subseteq U'$ and the morphism
$$
f : T_\xi \times T_{\beta'} \to T', \quad f(a,b) = ab
$$ is a finite covering. Let $\{e_1, \cdots, e_n\},$ respectively $\{e^{'}_1, \cdots, e^{'}_m\}$, be a basis of the lattice $\ker \exp \subset \liet_\xi$, respectively $\ker \exp \subset \liet_{\beta'}.$ If $i\xi = X_1e_1 + \cdots + X_ne_n$ and $i\beta' = Y_1e'_1 + \cdots + Y_me'_m,$ then $i(\xi + a\beta' )= X_1e_1 + \cdots + X_ne_n + aY_1e'_1 + \cdots + aY_me'_m.$
Denote by $T'_{\xi +a \beta}$ the closure of $\exp(\R(i(\xi+a\beta'))$.
Since $f$ is a covering, $\dim T_{\xi + a\beta'} = \dim T'_{\xi + a\beta'}.$ Hence,
$$
\dim T_{\xi + a\beta'} = \dim_\Q (\Q X_1 + \cdots + \Q X_n + \Q aY_1 + \cdots + \Q aY_m),
$$
see for instance Dustermatt Kolk, Lie groups p. 61.
Since $\beta' \neq 0,$ $Y_j \neq 0$ for some $j.$ Choose $a$ such that $aY_j \notin \Q X_1 + \cdots + \Q X_n.$ Then $\dim T_{\frac{\xi + a\beta'}{\parallel \xi + a \beta \parallel}} > \dim T_\xi$ which is a contradiction. Therefore, $\lambda_y > 0$ on $e(S(\liep'))$. By Lemma \ref{le2}, $\Phi(y,\cdot)$ is linearly proper on $\liep'$. This implies that $\Phi(y,\cdot)$ is bounded from below on $\liep'$ and
$$
m=\mathrm{inf}_{\alpha \in \liep'} \Phi(y,\exp(\alpha)),
$$
is achieved. We claim that
$$
m=\mathrm{inf}_{\alpha \in \liep^\lia} \Phi(y,\exp(\alpha)).
$$
Indeed, let $v\in \liep^\lia$. Then $v=v_1 + v_2$, where $v_1 \in \lia$ and $v_2\in \liep'$. By the cocycle condition, keeping in mind that $[v_1,v_2]=0$ and $v_1 \in \lieg_y$, we get
$$
\Phi(y,\exp(v))=\Phi(y,\exp(v_1))+\Phi(y,\exp(v_2)).
$$
We claim that $\Phi(y,\exp(v_1))=0$. Indeed, since $\lia \subset \lieg_y$, If $w \in S(\lia)$ then by formula $(\ref{formula})$  we get
$$
\lambda_y (e(w))=\langle \mu_{\liep^\lia} (y), -w \rangle \geq 0,
$$
for any $w\in S(\lia)$. This implies $\lambda_y (e(-w))=-\lambda_y (e(w))$ and so $\lambda_y (e(w))=0$ for any $w\in \lia$.

Let $w\in \lia\setminus\{0\}$ and let $s: \R \lra \R$ be the function $s(t)=\Phi(y,\exp(tw))$. Since $\exp(tw)y=y$ for any $t\in \R$, it follows that $s(t)$ is a linear function. Therefore, $s(t)=b t$ for some $b\in \R$. On the other hand
\[
0=\lambda_y (e \big(\frac{w}{\parallel w \parallel} \big))=\lim_{t\to +\infty}\frac{d}{dt}\Phi(y, \exp(tw))=b.
\]
This proves
\[
\mathrm{inf}_{\alpha \in \liep'} \Phi(y,\exp(\alpha))=\mathrm{inf}_{\alpha \in \liep^\lia} \Phi(y,\exp(\alpha)),
\]
and so $\Phi_y : (G^\lia)^o /(K^\lia)^o \lra \R$ has a minumum and so a critical point. By Proposition \ref{polystable-I}, it follows that $y$ is $(G^\lia)^o$ polystable and by Corollary \ref{cor1}, $y$ is $G$-polystable.
Suppose $x$ is polystable. There exists $g\in G$ such that $\mu_\liep(gx) = 0.$ Let $y = gx$ and fix $\beta \in \liep.$ Since the Kempf-Ness function is convex along geodesics,
$$
\lambda_y(e(\beta)) = \lim_{t\to \infty}\frac{d}{dt}\phi_y(\exp(t\beta)K) = \lim_{t\to \infty}\frac{d}{dt}\phi_y(\exp(-t\beta))\geq \frac{d}{dt}|_{t= 0}\phi_y(\exp(-t\beta)) = \langle \mu_\liep(y), \beta\rangle = 0.
$$ This shows that $\lambda_y \geq 0$ on $\partial_\infty M.$ By (\ref{Ginvariant}), $\lambda_x \geq 0.$  By Lemma \ref{compatible-Stabilizer}, $G_y$ is compatible with $\lieg_y = \liek_y \oplus \liep_y$ and $Z(y) = e(S(\liep_y)).$ Suppose there exist $p = e(\beta) \in Z(y).$ Then $e(-\beta)\in Z(y)$ also. Furthermore, $e(\beta)$ and $e(-\beta)$ are connected by the geodesic $[\exp(t\beta)].$ This means that the condition of the Theorem holds for $Z(y).$ Now, for $p\in Z(x),$ $g\cdot p\in Z(y).$ Let $q\in Z(y)$ be connected to $g\cdot p$ by a geodesic $\alpha.$ Then the geodesic $g^{-1}\circ \alpha$ connects $p$ to $g^{-1}\cdot q\in Z(x).$ This concludes the proof of the theorem.

\end{proof}

\begin{corollary}
    $x\in X$ is polystable if and only if there exists $\beta \in S(\liep),$ $y\in G\cdot x$ and $g\in (G^\beta)^o$ such that $\lambda_x(e(\beta)) = 0$ and $\mu_\liep(gy) = 0.$
\end{corollary}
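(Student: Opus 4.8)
The plan is to treat the two implications separately, since the reverse one is immediate and the forward one rests entirely on Theorem \ref{main result} and the construction in its proof. For the reverse direction I would simply observe that if $\beta\in S(\liep)$, $y\in G\cdot x$ and $g\in(G^\beta)^o$ satisfy $\mup(gy)=0$, then $gy\in G\cdot y=G\cdot x$ and $gy\in\mup^{-1}(0)$, so $G\cdot x\cap\mup^{-1}(0)\neq\emptyset$ and $x$ is polystable. This uses neither $\lambda_x(e(\beta))=0$ nor the refinement $g\in(G^\beta)^o$, so the content of the corollary lies entirely in the forward direction.

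For the forward implication I would assume $x$ polystable with $Z(x)\neq\emptyset$ and replay the argument of Theorem \ref{main result}. First choose $\beta\in S(\liep)$ with $e(\beta)\in Z(x)$ and $\dim T_\beta$ maximal among $\eta\in e^{-1}(Z(x))$; the connectedness part of the theorem then furnishes a geodesic $\alpha(t)=g\exp(t\xi)K$ with $\alpha(\pm\infty)\in Z(x)$ and $e(\beta)=g\cdot e(\xi)$. Setting $y=g^{-1}x$ and $\lia=i\liet_\xi\cap\liep^\xi$, which contains $\xi$, the theorem shows that $y$ is $(G^\lia)^o$-polystable, so there is $h\in(G^\lia)^o$ with $\mup(hy)=0$. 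Since $\xi\in\lia$ gives $G^\lia\subseteq G^\xi$, hence $(G^\lia)^o\subseteq(G^\xi)^o$, this element already lies in $(G^\xi)^o$.

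The remaining point is to transport $h$ into $(G^\beta)^o$. I would write $g=kh'$ with $k\in K$ and $h'\in G^{\xi+}$ via Proposition \ref{parabolic-decomposition}, and use, as in the proof of Lemma \ref{torus}, that $G^{\xi+}$ fixes $e(\xi)$ and that the $K$-action on $S(\liep)$ is the adjoint one. This yields $e(\beta)=g\cdot e(\xi)=k\cdot e(\xi)=e(\Ad(k)\xi)$, whence $\beta=\Ad(k)\xi$ and $(G^\beta)^o=k(G^\xi)^o k^{-1}$. Consequently $g_\ast:=khk^{-1}\in(G^\beta)^o$ and $y_\ast:=ky=kg^{-1}x\in G\cdot x$ satisfy $g_\ast y_\ast=k(hy)$, and by $K$-equivariance of $\mup$ one gets $\mup(g_\ast y_\ast)=\Ad(k)\,\mup(hy)=0$. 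Together with $\lambda_x(e(\beta))=0$, which holds because $e(\beta)\in Z(x)$, this supplies exactly the triple $(\beta,y_\ast,g_\ast)$ required. A shorter variant takes $g_\ast=e$ with $y_\ast$ a zero-level representative $g_0x$ and transfers a direction $\beta_y\in S(\liep_y)$ through \eqref{Ginvariant} and Lemma \ref{compatible-Stabilizer}, but I prefer the route above since it genuinely realizes $g$ inside $(G^\beta)^o$.

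The one genuine obstacle, which I would flag explicitly, is the stable case. When $x$ is stable one has $Z(x)=\emptyset$ and $\lambda_x>0$ on all of $\partial_\infty M$, so no $\beta\in S(\liep)$ can satisfy $\lambda_x(e(\beta))=0$; the right-hand condition then fails although $x$ is polystable. The stated equivalence is therefore really the equivalence for orbits that are not stable, and the forward direction should be carried out under the standing hypothesis $Z(x)\neq\emptyset$. Apart from this, the only technical work is the conjugation bookkeeping $\beta=\Ad(k)\xi$ and the invocation of $K$-equivariance of $\mup$, both routine once the maximal-torus-dimension choice of $\beta$ from Theorem \ref{main result} is available.
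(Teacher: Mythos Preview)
The paper states this corollary immediately after Theorem~\ref{main result} without proof, so there is nothing to compare your argument against directly. Your proposal is, in fact, more than the paper offers.

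Your treatment of both directions is sound. The reverse implication is indeed trivial and uses none of the extra structure. For the forward implication in the case $Z(x)\neq\emptyset$, your plan---replay the maximal-torus-dimension choice from the proof of Theorem~\ref{main result}, extract $h\in(G^{\lia})^o\subseteq(G^{\xi})^o$ with $\mup(hy)=0$, and then conjugate through $k\in K$ coming from $g=kh'$ with $h'\in G^{\xi+}$ to land in $(G^{\beta})^o$---is correct. The two facts you need, that $(\mup)\vert_{X^{\lia}}$ already takes values in $\liep^{\lia}$ (Proposition~\ref{restric}, so $\mu_{\liep^{\lia}}(hy)=0$ forces $\mup(hy)=0$) and that $G^{\xi+}$ fixes $e(\xi)$, are exactly the ones available, and the $K$-equivariance of $\mup$ finishes the bookkeeping.

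Your flagged obstacle is genuine and not an artifact of your approach: if $x$ is stable then Theorem~\ref{stable} gives $\lambda_x>0$ everywhere on $\partial_\infty M$, so no $\beta\in S(\liep)$ can satisfy $\lambda_x(e(\beta))=0$, and the right-hand side of the corollary fails even though $x$ is polystable. The corollary as written therefore characterizes only the polystable-but-not-stable orbits; your decision to work under the standing hypothesis $Z(x)\neq\emptyset$ is the correct way to make the forward direction go through, and it would be appropriate to record this caveat explicitly.
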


\end{document}